\RequirePackage[table]{xcolor}
\documentclass[bj,authoryear, noshowframe]{imsart} 
\makeatletter
    \def\journal@name{}
    \def\journal@id{}
\makeatother

\overfullrule=1mm

\RequirePackage{amsthm,amsmath,amsfonts,amssymb}
\usepackage{mathtools}
\usepackage{bbm}
\usepackage{csquotes}
\usepackage{pgfplots}
\usepackage{pgfplotsthemetol}
\pgfplotsset{compat=1.11}
\usepgfplotslibrary{fillbetween}
\usepgfplotslibrary{groupplots}
\usepackage{tikz}
\usepackage{tikz-cd}
\usetikzlibrary{arrows, positioning}
\usepackage{subcaption}
\usepackage{enumitem}
\usepackage{bm}
\usepackage{algorithm2e}
\usepackage{multirow}
\usepackage{booktabs}
\usepackage{pdfpages}

\startlocaldefs
\theoremstyle{plain}
\newtheorem{theorem}{Theorem}[section]
\newtheorem{lemma}[theorem]{Lemma}
\newtheorem{proposition}[theorem]{Proposition}
\newtheorem{corollary}[theorem]{Corollary}
\theoremstyle{remark} 
\newtheorem{definition}[theorem]{Definition}
\newtheorem{remark}[theorem]{Remark}
\newtheorem{example}[theorem]{Example}
\newcommand{\R}{\ensuremath{\mathbb{R}}}
\newcommand{\C}{\ensuremath{\mathcal{C}}}	

\newcommand{\boldm}[1]{\bm{#1}}						
\newcommand{\rbraces}[1]{\left( #1 \right)} 		
\newcommand{\cbraces}[1]{\left[ #1 \right]}			
\newcommand{\curly}[1]{\left\{ #1 \right\}} 		
\newcommand{\abs}[1]{\left\lvert #1 \right\rvert} 	
\newcommand{\norm}[1]{\left\lVert #1 \right\rVert} 	

\newcommand{\indFunc}[1]{\mathbbm{1}_{#1}} 
\newcommand{\cInt}[4]{\int_{#2}^{#3} #1 \ \mathrm{d}#4}		

\newcommand{\ir}[1]{#1^\uparrow}		
\newcommand{\dr}[1]{#1^\downarrow}		

\newcommand{\CBnn}[1]{{C}^{\#}_{N_1,N_2}(#1)}	
\newcommand{\RD}{\ensuremath{R}}		
\newcommand{\RDLp}[1]{\RD_{#1}}

\newcommand{\RDrho}{\RDLp{\rho}}
\newcommand{\RDtau}{\RDLp{\tau}}
\newcommand{\RDk}{\RD_{\kappa}}
\newcommand{\RDm}{\RD_{\mu}}

\newcommand{\RDmRV}[2]{\RDm (#1 , #2)}

\newcommand{\refsecA}{S.I}
\newcommand{\refdefcopula}{S.I.1}

\newcommand{\refdefconcordance}{S.I.5}
\newcommand{\refpropertiesdecreasingrearrangement}{S.I.6}
\newcommand{\refdefmajorization}{S.I.7}
\newcommand{\refproporderproperties}{S.I.8}

\newcommand{\customAppendixHeading}[1]{\noindent\textbf{#1}\quad}

\newcommand{\HD}{}

\endlocaldefs

\begin{document}

\begin{frontmatter}
\title{Rearranged dependence measures}
\runtitle{Rearranged dependence measures}

\begin{aug}
\author[A]{\fnms{Christopher} \snm{Strothmann}\ead[label=e1]{christopher.strothmann@mathematik.tu-dortmund.de}}
\author[B]{\fnms{Holger} \snm{Dette}\ead[label=e2]{holger.dette@rub.de}}
\author[A]{\fnms{Karl Friedrich} \snm{Siburg}\ead[label=e3]{karl.f.siburg@mathematik.tu-dortmund.de}}

\address[A]{Department of Mathematics, TU Dortmund University, Vogelpothsweg 87, 44221 Dortmund, Germany, \printead{e1,e3}}
\address[B]{Department of Mathematics, Ruhr-University Bochum, Universit\"atsstra\ss e 150, 44780 Bochum, Germany, \printead{e2}}
\end{aug}

\begin{abstract}
Most of the popular dependence measures for two random variables $X$ and $Y$ (such as Pearson’s and Spearman’s correlation, Kendall's $\tau$ and Gini's  $\gamma$) vanish whenever $X$ and $Y$ are independent.
However, neither does a vanishing dependence measure necessarily imply independence, nor does a measure equal to 1 imply that one variable is a measurable function of the other.
Yet, both properties are natural properties for a convincing dependence measure.

In this paper, we present a general approach to transforming a given dependence measure into a new one which exactly characterizes independence as well as functional dependence.
Our approach uses the concept of monotone rearrangements as introduced by Hardy and Littlewood and is applicable to a broad class of measures.  
In particular, we are able to define  a rearranged Spearman's $\rho$ and a rearranged Kendall's $\tau$ which do attain the value $0$ if and only if both variables are independent, and the value $1$ if and only if one variable is a measurable function of the other.
We also present simple estimators for the rearranged dependence measures, prove their consistency and illustrate their finite sample properties by means of a simulation study and a data example.
\end{abstract}

\begin{keyword}[class=MSC]
\kwd[Primary ]{62H20}
\kwd[; secondary ]{62H05} 
\end{keyword}

\begin{keyword}
\kwd{measure of dependence}
\kwd{coefficient of correlation}
\kwd{decreasing rearrangement}
\kwd{copula}
\end{keyword}

\end{frontmatter}

\section{Introduction} 
\def\theequation{1.\arabic{equation}}
\setcounter{equation}{0}

One of the most fundamental problems in statistics is to measure the association between two random variables  $X$ and $Y$ based on a sample of independent identically  distributed observations $(X_1, Y_1) , \ldots , (X_n, Y_n)$, and numerous proposals have been made for this purpose.
These measures  usually vary in the interval $[0,1]$ or  $[-1,1]$, and vanish if the variables are independent. 
Moreover, many of these measures, including the frequently used Pearson’s and Spearman’s correlation, Kendall's $\tau$ and Gini's  $\gamma$, are very powerful to detect linear and monotone dependencies.
On the other hand, in general, a vanishing dependence measure (such as  Pearson's coefficient) only implies  independence of $X$ and $Y$ under quite restrictive additional assumptions (such as a normal distribution), and it is a well known fact  that many of these measures cannot detect non-monotone associations.

Several authors have proposed solutions to this problem by introducing alternative dependence measures, but mainly in the context of testing for independence. 
Among the many contributions, we mention exemplary the early work of \cite{blum1961,rosenblatt1975,Schweizer.1981,CSORGO1985} and the more recent papers by  \cite{szekely2007,gretton2008,bergsma2014} and \cite{zhang2019}. 
However, as pointed out by  \cite{Chatterjee.2020}, these measures are designed primarily for testing independence, and not for measuring the strength of the relationship between the variables.  
In the same  paper, a correlation coefficient is presented which estimates a (population) measure $\mu$ of the  dependence between two random variables $X$ and $Y$ with the following properties:
\begin{enumerate}
\smallskip 
\item[(1.1)] ~~~~~~ $ 0\leq \mu(X,Y)\leq 1$ \smallskip 
\item[(1.2)]~~~~~~ $\mu(X,Y) = 0$ if, and only if, $X$ and $Y$ are independent \smallskip 
\item[(1.3)] ~~~~~~  $\mu(X,Y) = 1$ if, and only if, $Y=f(X)$ for some measurable function $f: \mathbb{R} \rightarrow \mathbb{R}$.
\smallskip 
\end{enumerate}
For continuous distributions the measure $\mu$ has been introduced and studied in \cite{Dette.2012} who also proposed a kernel based estimator for it.
Since its introduction, Chatterjee's correlation coefficient has found considerable attention in the literature  \citep[see][among others]{caobickel2020,debghosalsen2020,gamboaetal2020,shidrtonhan2021a,shidrtonhan2021b,auddy2021exact,linhan2021}, which underlines the demand for dependence measures possessing the above properties (1.1)--(1.3).

This paper takes a quite different viewpoint on this problem by formulating the following question:

\smallskip 

\noindent\emph{Is it possible to transform a given dependence measure in such a way that the new dependence measure satisfies properties (1.1)--(1.3)?} 

\smallskip

Our answer to this question  is affirmative. More precisely, we will show that there exists a well defined  transformation $\mu \mapsto R_\mu$ with the following property. Whenever the  dependence measure $\mu$ satisfies  the axioms (1.1) to (1.3) on the set of \emph{stochastically increasing} continuous distributions, the new dependence measure $R_\mu$ will satisfy (1.1) to (1.3) on the set of \emph{all} continuous distributions.  
By definition, a pair $(X,Y)$ of random variables is stochastically increasing if the function $x\mapsto \mathbb{P}(Y\leq y \mid X=x)$ is decreasing for each fixed $y$ \citep[see, e.g.][]{Nelsen.2006}. 
This property was also discussed earlier in \cite{Lehmann.1959} under the term {\it  positive regression dependence}. 
 
The transformed  dependence measure  $R_\mu$ will be called the {\it rearranged dependence measure}. 
It turns out that the new transformation is applicable to many of the classical dependence measures and, consequently, enables us to define rearranged dependence measures  such as the rearranged  Spearman's $\rho$ and the rearranged Kendall's $\tau$, all of which satisfy properties (1.1)--(1.3).

Our approach is based on  a classical concept from majorization theory which is called {\it monotone rearrangement} \cite[see, for instance,][]{hardylittlewoodpolya,Ryff.1965,Ryff.1970}.
In the last decades, monotone rearrangements have found considerable interest in the statistical literature. 
For example, \cite{detneupil2006,chern2009,dragi2019,camirand2022} used this concept to define  (smooth)  monotone estimates, while \cite{detvol2008,chern2010} successfully applied  rearrangements  techniques  to define quantile regression estimates  without crossing.
Recently, \cite{detwu2019} used monotone rearrangements  to detect relevant changes in a (not necessarily monotone) trend of a non-stationary time series.

Our paper is organized as follows. 
In Section~\ref{sec2}, we recall the  concept of monotone rearrangements and introduce our transformation of  a given dependence measure to a new measure with the desired properties (1.1)--(1.3) in several steps. 
First, we characterize the dependence measure  $\mu (X,Y) = \mu (C)$ in terms of the copula $C$ of the corresponding distribution function of $(X,Y)$.
Then we apply a monotone rearrangement to the partial derivative of $C$ with respect to its first argument, which essentially constitutes the conditional distribution\footnote{$F_X$ and $F_Y$ denote the marginal distributions of $X$ and $Y$, respectively.}  $u \mapsto \mathbb{P}(F_Y(Y) \leq v \mid  F_X(X)=u)$, and integrate it with respect to the conditioning coordinate. 
The resulting rearranged copula is denoted by $\ir{C}$ and, roughly speaking, it can be shown that the {\it rearranged dependence measure} $$ R_\mu (C) := \mu (\ir{C}) $$ satisfies the desired properties (1.1)--(1.3).
In Section~\ref{sec3}, we propose an estimate of the  rearranged dependence measure $R_\mu (C) $,   which is obtained by applying the procedure   to the so-called checkerboard copula \citep[see][for example]{Li.1997}. 
We also prove consistency of the estimate and illustrate the finite sample properties of our approach by means of a small simulation study in Section~\ref{sec4}. Finally, all proofs are deferred to appendices and the online supplement which also contains some general results on monotone rearrangements, used for our theoretical arguments.

\section{Dependence measures with properties (1.1)--(1.3)} \label{sec2}
 \def\theequation{2.\arabic{equation}}
\setcounter{equation}{0}

In this section, we  construct  a rearranging transformation which 
assigns to  some given dependence measure $\mu$ a new measure $R_{\mu}$ with the desired properties (1.1)--(1.3).
We also discuss some further useful properties of the rearranged  measure.
To be precise, let $(X,Y)$ denote a $2$-dimensional  random vector
with  continuous distribution function $F$ and marginal distribution functions  $F_X$ and $F_Y$. 
The dependence  structure of $X$ and $Y$ is completely encoded in the (unique) copula $C=C_{X,Y}$ (see Definition~\refdefcopula{} in the Supplementary Material (\cite{Strothmann_supplement.2022})) 
defined by the equation $$ C(F_X(x) ,F_Y(y) ) = F(x,y) $$
as described, for instance, in \cite{Nelsen.2006}. The class of all copulas corresponding to continuous $2$-dimensional distributions is denoted by  $\C$.

The proofs of all the results in this section are deferred to Appendix~\ref{appendix:section2} and the supplementary material.

\subsection{New dependence measures by monotone rearrangements} 
\label{sec21}

We always consider dependence measures of $(X,Y)$ as functions of the copula $C=C_{X,Y}$ and consequently use the notations $\mu (X,Y)$ and $\mu (C) $ interchangeably.
The key ingredient is a rearrangement of the conditional distribution functions 
\begin{equation} \label{det1}
u \mapsto \mathbb{P}(F_Y(Y) \leq v \mid F_X(X)= u) = \partial_1 C(u,v) 
:= {\frac{\partial}{\partial u}}C(u,v) 
\end{equation}
of the vector $(F_X(X), F_Y(Y)) $.
Note that the partial derivative $ \partial_1 C(u, v)$ is only defined almost everywhere.
We will suppress this fact in our notation for the remainder of this article.

\begin{definition} \label{def:sicopula}
A copula $C \in \C $ is called \emph{stochastically increasing (resp.\ decreasing)} if $u\mapsto \partial_1 C(u,v)$ is decreasing (resp.\ increasing) for each $v$.
The class of all  stochastically increasing copulas  is denoted by $\ir{\C}$. 
A copula $C$ is called \emph{stochastically monotone} if it is either stochastically increasing or decreasing.
Similarly, a random variable $Y$ is stochastically increasing (resp.\ decreasing/monotone) in $X$ if $C_{XY}$ is stochastically increasing (resp.\ decreasing/monotone).   
\end{definition}

We  will now introduce a procedure transforming an arbitrary copula into a stochastically increasing one. 
It is based on the  monotone rearrangement of a  univariate function,  which is a classical  concept  in majorization theory \citep[see, for example,][]{Chong.1971, Bennett.1988}. 
Namely, if $\lambda$ denotes the Lebesgue measure and $f: [0,1]\to \R$ is a Borel measurable function, then the {\it decreasing rearrangement}  $f^*:  [0,1] \to \mathbb{R} $
 of $f$  is defined by 
 \begin{equation}
  \label{hd11}   
f^*(t) := \inf \{x \mid \lambda \rbraces{\{u \in [0, 1] \mid f(u) > x \}} \leq t \} ~.
 \end{equation}
 Obviously, the function $f^*$ 
 is a decreasing function and we have $f^* =f$ whenever $f$ is decreasing and right-continuous.

\begin{definition}  \label{def2.2}
{ 
The {\it stochastically increasing rearrangement}, (SI)-rearrangement in short, of a copula $C \in \mathcal{C} $ is defined as
\begin{align} \label{det102}
\ir{C}(u, v) := \cInt{(\partial_1 C)^*(s, v)}{0}{u}{s}  
\end{align}
where the rearrangement  \eqref{hd11} is applied to the first coordinate of $\partial_1 C(u,v)$.
}
\end{definition}

Our next result shows that $\ir{C}$ defines in fact a copula.\footnote{The analogous definition of the stochastically decreasing rearrangement copula $\dr{C}$ is given and discussed in Appendix~\ref{appendix:section2}; see also \citep{Ansari2021}. All subsequent theoretical results can be stated and proven for either $\ir{C}$ or $\dr{C}$.}

\begin{theorem} \label{thm:rearrangement}
The (SI)-rearrangement $\ir{C}$ of a copula $C$ is a stochastically increasing copula. Moreover,  $\ir{C} =C $ if and only if $C$ is stochastically increasing itself. 
\end{theorem}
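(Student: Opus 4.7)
My plan is to verify in turn that $\ir{C}$ satisfies the two boundary/grounding conditions of a copula (Definitions~\refdefcopulagrounded{} and \refdefcopulaincreasing{}), that it is $2$-increasing, that $\partial_1\ir{C}$ is decreasing in its first argument (which is what stochastic increasingness requires), and finally the fixed-point characterization $\ir{C}=C \Leftrightarrow C \in \ir{\C}$.

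For the boundary conditions, the values at $u=0$ and at $v=0$ are immediate: the integrand in \eqref{det102} is zero on the empty interval, respectively equals $(\partial_1 C(\cdot,0))^* \equiv 0$ since $C(\cdot,0)\equiv 0$. The non-trivial margins use equimeasurability of the decreasing rearrangement: $\ir{C}(1,v)=\int_0^1(\partial_1 C)^*(s,v)\,\mathrm{d}s=\int_0^1 \partial_1 C(s,v)\,\mathrm{d}s=C(1,v)-C(0,v)=v$, and $\ir{C}(u,1)=\int_0^u(\partial_1 C(\cdot,1))^*(s)\,\mathrm{d}s = \int_0^u 1\,\mathrm{d}s = u$ since $C(s,1)=s$ forces $\partial_1 C(\cdot,1)\equiv 1$ a.e.

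For $2$-increasingness, fix $u_1\le u_2$ and $v_1\le v_2$ and write the rectangle mass as
\[
\int_{u_1}^{u_2}\bigl[(\partial_1 C)^*(s,v_2)-(\partial_1 C)^*(s,v_1)\bigr]\,\mathrm{d}s .
\]
Since $C$ is itself $2$-increasing, $u\mapsto C(u,v_2)-C(u,v_1)$ is nondecreasing, so $\partial_1 C(s,v_1)\le \partial_1 C(s,v_2)$ for almost every $s$. The decreasing rearrangement is monotone in its argument (if $f\le g$ a.e.\ then $f^*\le g^*$ pointwise, which I will cite from the properties of rearrangements collected in the supplement, Proposition~\refpropertiesdecreasingrearrangement{}), hence the integrand is $\ge 0$. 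The stochastic increasingness is then automatic: by the Lebesgue differentiation theorem, $\partial_1 \ir{C}(u,v)=(\partial_1 C)^*(u,v)$ for a.e.\ $u$, and $(\partial_1 C)^*(\cdot,v)$ is decreasing by construction, which is exactly Definition~\ref{def:sicopula}.

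For the characterization, the ``if'' direction $\ir{C}=C\Rightarrow C\in\ir{\C}$ is immediate from the preceding paragraph, since $\partial_1 C=\partial_1 \ir{C}=(\partial_1 C)^*$ is decreasing in $u$. For the converse, if $\partial_1 C(\cdot,v)$ is already decreasing, then it agrees with its decreasing rearrangement at every point of right-continuity, hence almost everywhere; integrating yields $\ir{C}(u,v)=C(u,v)$ for every $(u,v)$.

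The main technical subtlety I anticipate is the ``almost everywhere'' nature of $\partial_1 C$ and of the identity $(\partial_1 C)^*=\partial_1 C$ for a decreasing $\partial_1 C$: I need the standard fact that two bounded Borel functions which agree a.e.\ on $[0,1]$ have the same decreasing rearrangement, so that $\ir{C}$ is well defined independently of the version of $\partial_1 C$ chosen, and the integrated identities above are not affected by null-set ambiguities. All of this I would quote from the general results on monotone rearrangements collected in the supplement rather than re-derive.
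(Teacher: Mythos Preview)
Your proposal is correct and follows essentially the same route as the paper: verify the copula axioms for $\ir{C}$ using equimeasurability and order-preservation of the decreasing rearrangement (the paper's Proposition~\refpropertiesdecreasingrearrangement{}), note that $\partial_1\ir{C}=(\partial_1 C)^*$ is decreasing, and handle the fixed-point characterization by observing that a decreasing function coincides a.e.\ with its own rearrangement. The only cosmetic difference is that, for the converse $C\in\ir{\C}\Rightarrow \ir{C}=C$, the paper phrases the argument via concavity of $u\mapsto C(u,v)$ and invokes \cite[Thm.~4.2]{Chong.1971} to conclude $\partial_1^+ C(\cdot,v)=(\partial_1 C)^*(\cdot,v)$, whereas you argue the same identity directly.
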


For a given dependence measure $\mu$, we now define 
a new dependence  measure by 
\begin{equation}
    \label{hd1}
R_{\mu}(C) := \mu(\ir{C}) .
\end{equation} 
 We call $R_{\mu}$ the \emph{rearranged dependence measure} obtained from $\mu$. Note that, in general,  $R_{\mu}$ 
 differs from $\mu$ and hence yields a new measure of dependence.
 Our main result is the following.

\begin{theorem} \label{thm:good_measure_R}
Suppose $\mu$ is a dependence measure which, when restricted to the set 
$\ir{\C}$, satisfies the properties (1.1)--(1.3). Then 
the rearranged dependence measure $R_\mu $  satisfies  the properties (1.1)--(1.3) 
 on the whole set  $\C$.
\end{theorem}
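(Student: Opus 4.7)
The plan is to verify properties (1.1)--(1.3) for $R_\mu$ on the full class $\C$ by, in each case, using Theorem~\ref{thm:rearrangement} to move the question from $C \in \C$ to $\ir{C} \in \ir{\C}$ (where the hypothesis on $\mu$ applies directly) and then translating the resulting information back to $C$. The only extra ingredient needed beyond Theorem~\ref{thm:rearrangement} is the equidistribution property of the decreasing rearrangement \eqref{hd11}, namely that $f$ and $f^*$ share the same Lebesgue distribution function on $[0,1]$; I would invoke this via two immediate corollaries: $f^*$ is a.e.\ constant if and only if $f$ is, and $f^*$ is $\{0,1\}$-valued a.e.\ if and only if $f$ is.

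Property (1.1) is immediate: since $\ir{C} \in \ir{\C}$, the hypothesis on $\mu$ gives $0 \leq R_\mu(C) = \mu(\ir{C}) \leq 1$.

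For (1.2), I would first observe that the independence copula $\Pi(u,v) = uv$ satisfies $\partial_1 \Pi(u,v) = v$, which is (trivially) non-increasing in $u$; thus $\Pi \in \ir{\C}$, and Theorem~\ref{thm:rearrangement} yields $\ir{\Pi} = \Pi$, giving $R_\mu(\Pi) = \mu(\Pi) = 0$. Conversely, if $R_\mu(C) = 0$, the hypothesis on $\mu$ restricted to $\ir{\C}$ forces $\ir{C} = \Pi$, which means that for each fixed $v$ the rearrangement $(\partial_1 C)^*(\cdot, v)$ equals the constant function $v$. Equidistribution then forces $\partial_1 C(\cdot, v) = v$ almost everywhere, and integrating in the first coordinate recovers $C(u,v) = uv$, i.e.\ $C = \Pi$.

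For (1.3), I would rely on the standard measure-theoretic characterization
\[
Y = f(X) \text{ a.s.\ for some measurable } f \;\Longleftrightarrow\; \partial_1 C_{X,Y}(u,v) \in \{0,1\} \text{ for a.e.\ } (u,v),
\]
combined with the second corollary of equidistribution recalled above. If $R_\mu(C) = 1$, the hypothesis on $\mu$ yields that the stochastically increasing copula $\ir{C}$ represents a functional dependence, so $(\partial_1 C)^*(u,v) = \partial_1 \ir{C}(u,v) \in \{0,1\}$ a.e.; equidistribution then gives $\partial_1 C(u,v) \in \{0,1\}$ a.e., so $C$ itself represents a functional dependence. The reverse implication reverses these steps. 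The main technical obstacle is therefore not conceptual but the careful bookkeeping of the "a.e." qualifiers in Definition~\ref{def2.2}, together with the two auxiliary facts about $f^*$ used above; both follow from equidistribution and are presumably among the general results on monotone rearrangements collected in the supplementary material referenced in the introduction.
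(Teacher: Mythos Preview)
Your proof is correct. The only genuine difference from the paper is in the converse of (1.2): the paper first establishes an auxiliary sandwich lemma $\dr{C} \leq C \leq \ir{C}$ (where $\dr{C}(u,v) = v - \ir{C}(1-u,v)$ is the stochastically decreasing rearrangement), then observes that $\ir{C}=\Pi$ forces $\dr{C}=\Pi$ as well, squeezing $C=\Pi$. You bypass this entirely by invoking equidistribution to conclude that $(\partial_1 C)^*(\cdot,v)\equiv v$ forces $\partial_1 C(\cdot,v)\equiv v$ a.e., and then integrating. Your route is more direct and self-contained for this theorem; the paper's sandwich lemma, however, is reused later (for Theorem~\ref{cor:spearman_positive}), so it is not wasted effort there. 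For (1.3) the two arguments are essentially identical: both rely on the characterization of complete dependence via $\partial_1 C \in \{0,1\}$ a.e.\ and the fact that the rearrangement preserves this property, which the paper phrases via the measure-preserving transformation $\sigma_v$ and you phrase as equidistribution.
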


\begin{remark} 
\label{rem1} 
{\rm Recently, dependence measures with the properties (1.1)--(1.3) have found considerable attention in the literature. For example, \cite{Trutschnig.2011} defined the measure 
 $$
 \zeta_1(C)  = 3 \cInt{\cInt{ \abs{\partial_1 C(u, v) - v}}{0}{1}{u}}{0}{1}{v} , 
 $$
while  \cite{Dette.2012} and \cite{Chatterjee.2020} considered (and proposed estimates for) the measure
\begin{equation}
    \label{detb}
r(C) = 6 \cInt{\cInt{ \rbraces{\partial_1 C(u, v) - v}^2}{0}{1}{u}}{0}{1}{v} .
\end{equation}
It will be shown in Appendix~\ref{appendix:section2} that the stochastically increasing rearrangement captures the entire information about the degree of dependence as defined by these measures in the sense that
\begin{equation} \label{det2}
\zeta_1(C) = \zeta_1(\ir{C}) \text{ as well as } r(C) = r(\ir{C}).
\end{equation}}
\end{remark}

\subsection{Examples} \label{sec22} 

In  this section, we  illustrate the rearrangement approach by a couple of examples. 
In particular, our method  is applicable to construct a rearranged Spearman's $\rho$ or Kendall's $\tau$ from those classical measures of concordance. 
Moreover,  we derive some interesting properties of the rearranged dependence measures.

\begin{example}[Schweizer-Wolff measures] \label{prop:measure_lp}
{\rm 
Let $\Pi (u,v)  = u v$ denote the independence copula. Each $L^p$-norm with $1 \leq p < \infty$ defines a  so-called Schweizer-Wolff measure 
\begin{equation} \label{det11} 
\sigma_p(C) := \frac{\norm{C - \Pi}_p}{\norm{C^+ - \Pi}_p} ~,
\end{equation}
where the copula $C^+$ is defined by $C^+(u, v) = \min \curly{u, v}$ (see \refsecA{} in the Supplementary Material (\cite{Strothmann_supplement.2022})).
The measure $\sigma_1$ was considered in \cite{Schweizer.1981}, \HD{$\sigma_2$ is also known as Blum-Kiefer-Rosenblatt's $R$,} and the general case $p \geq 1$ can be found in Section~5.3.1  of \cite{Nelsen.2006}.
It is easy to see that properties (1.1) and (1.2) hold for $\sigma_p$, and it is well known  that
$\sigma_p(C)=1$   if  and only $Y=f(X)$ for some strictly monotone (but not just measurable) function $f$  \cite[Sect.~5.3.1]{Nelsen.2006}. 
Consequently, $\sigma_p$ does \emph{not} satisfy property (1.3). 
On the other hand, it will be shown in Appendix~\ref{appendix:section2} that the properties (1.1)--(1.3) do hold for  the restriction of  $\sigma_p$   to the set
 $\ir{\C}$. Therefore, the rearranged  Schweizer-Wolff measure 
 $$ R_{\sigma_p}(C) = \frac{\norm{\ir{C} - \Pi}_p}{\norm{C^+ - \Pi}_p} ~ 
 $$
 defines a new dependence measure on $\C$ satisfying all the properties (1.1)--(1.3) on $\C$.
}
\end{example}
 
\begin{example}[Measures of concordance] \label{ex:kappa_measures}
{\rm 
Let $\kappa: \C\to [-1,1]$ be a measure of concordance (see Definition~\refdefconcordance{} in the Supplementary Material (\cite{Strothmann_supplement.2022})).  
Typical examples include Spearman's $\rho$, Kendall's $\tau$, Gini's $\gamma$, and Blomqvist's $\beta$ (see Appendix~\ref{appendix:section2} for a representation of these measures in terms of the copula). 
We will prove in Appendix~\ref{appendix:section2} that  the measures $\rho, \tau$ and $\gamma$  satisfy (1.1)--(1.3) on
the set  $\ir{\C}$ (but not on $\C$); on the other hand, Blomqvist's $\beta$ does not satisfy (1.3) on $\ir{\C}$. 

Consequently, by Theorem \ref{thm:good_measure_R}, 
the  rearranged  Spearman's $\rho$ ($R_{\rho}$),  Kendall's $\tau$ ($R_{\tau}$) and  Gini's $\gamma$ ($R_{\gamma}$)
define dependence measures (different from their original measures) satisfying (1.1)--(1.3) on $\C$.
}
\end{example}

We will now see that, surprisingly, the Schweizer-Wolff measure $\sigma_1$ and 
Spearman's $\rho$ induce  the same rearranged dependence measure.

\begin{proposition} \label{prop:spearman}
We have $R_{\sigma_1} = R_{\rho}$.
\end{proposition}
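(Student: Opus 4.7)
The plan is to reduce both $R_{\sigma_1}(C)$ and $R_{\rho}(C)$ to integrals of the same quantity by exploiting the fact that the rearranged copula $\ir{C}$ is positively quadrant dependent. More precisely, using the standard copula representations
\begin{align*}
\sigma_1(D) &= 12 \cInt{\cInt{\abs{D(u,v) - uv}}{0}{1}{u}}{0}{1}{v}, \\
\rho(D) &= 12 \cInt{\cInt{(D(u,v) - uv)}{0}{1}{u}}{0}{1}{v},
\end{align*}
valid for any $D \in \C$ (the normalising constant $12$ comes from $\norm{C^+ - \Pi}_1 = 1/12$), the identity $R_{\sigma_1}(C) = R_\rho(C)$ reduces to showing that
$$\abs{\ir{C}(u,v) - uv} = \ir{C}(u,v) - uv \quad \text{for almost every } (u,v) \in [0,1]^2,$$
i.e.\ that $\ir{C}$ is positively quadrant dependent.

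The first step is therefore to write out both rearranged measures using the definition $R_\mu(C) = \mu(\ir{C})$ together with the two integral representations above. The second step is to establish $\ir{C}(u,v) \geq uv$ for all $(u,v) \in [0,1]^2$. By Theorem~\ref{thm:rearrangement}, $\ir{C}$ is stochastically increasing, so $s \mapsto \partial_1 \ir{C}(s,v)$ is decreasing for each fixed $v$. Since this map is a conditional distribution function satisfying $\cInt{\partial_1 \ir{C}(s,v)}{0}{1}{s} = v$, its average over any initial segment $[0,u]$ is at least the global average $v$, that is,
$$\ir{C}(u,v) = \cInt{\partial_1 \ir{C}(s,v)}{0}{u}{s} \geq u v.$$
Once this positive quadrant dependence is in hand, the absolute value disappears and both integrals coincide, yielding $R_{\sigma_1}(C) = R_\rho(C)$.

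The main conceptual step is really the PQD property of stochastically increasing copulas; everything else is bookkeeping with the integral formulas and the definition of the rearranged measure. This is a classical implication (SI $\Rightarrow$ PQD) that can be either invoked directly or proved in two lines from the decreasing-rearrangement construction of $\partial_1 \ir{C}$, as sketched above. I expect no real obstacle beyond this observation; in particular, no properties of the rearrangement beyond Theorem~\ref{thm:rearrangement} are needed.
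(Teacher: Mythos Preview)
Your proof is correct and follows essentially the same route as the paper: both reduce the identity to the positive quadrant dependence $\ir{C}\geq\Pi$, after which the absolute value in $\sigma_1(\ir{C})$ disappears and the two integral representations coincide. The paper simply invokes $\ir{C}\geq\Pi$ (established earlier from the stochastic monotonicity of $\ir{C}$), whereas you give the short direct argument via the decreasing derivative; otherwise the arguments are the same.
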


\begin{remark} \label{rmk:number_dependence_meas}
We point out that there are even uncountably many dependence measures $\mu$ satisfying $R_\mu = R_\rho$. Indeed, pick any function $f:\C\to [0,1]$ being $1$ on $\ir{\C}$ and $0$ outside some neighbourhood of $\ir{\C}$ (apply Urysohn's lemma to the closed convex set $\ir{\C}$), and consider the dependence measures $\mu := f\rho + (1-f) \nu$ where $\nu\neq\rho$. Then $\mu=\rho$ on $\ir{\C}$ so that $R_{\mu}=R_{\rho}$, regardless of the choice of $\nu$.
\end{remark}

\begin{remark}
\HD{
A referee raised the question if 
there exist ``well-known''  dependence measure $\mu \neq r$ such that $R_\mu = R_r$. 
In the following, we will derive a necessary condition for such measures.
Since $r$ is invariant under rearrangement, we have $R_r(C) = r(C)$.
Now suppose $X$ and $Y$ follow a bivariate normal distribution with correlation $p \in [-1, 1]$  and corresponding copula $C_p$.
Since $C_p = \ir{C}_\rho$, it follows that  
for a normal distribution the dependence measure $\mu$ must  satisfy 
\begin{equation*}
    \mu(X, Y)   = \mu(C_p)  
                = \mu(\ir{C}_p)
                = R_\mu(C_p) =R_r (C_p)
                = r(C_p)
                = \frac{3}{\pi} \arcsin\Big (\frac{1 + p^2}{2}\Big ) - \frac{1}{2} ~.
\end{equation*}
We are not aware of any ``well-known''  dependence measure fulfilling this property.
However,  by the same technique as in Remark~\ref{rmk:number_dependence_meas}, it can be shown that there exist infinitely many 
\enquote{dependence measures} $\mu$ on $\mathcal{C}$ such that  $R_{\mu} = R_r$.
Thus, the equivalence class $[r] := \{ \mu \mid R_\mu = R_r \}$ is not a singleton.}
\end{remark}

While a measure of concordance $\kappa$ measures the strength of the monotone association between two random variables, the corresponding rearranged dependence measure $\RDk$ measures the strength of their (directed) functional relationship.
Thus $\kappa$ should always attain smaller values than $\RDk$.
This heuristic  is confirmed by  the next theorem, which applies, in particular, to  Spearman's $\rho$ and Kendall's $\tau$.

\begin{theorem} \label{cor:spearman_positive}
Let $\kappa$ be a measure of concordance satisfying (1.1)--(1.3) on  the set $\ir{\C}$. 
Then
\begin{equation} 
\label{rev1} 
\abs{\kappa(C)} \leq R_{\kappa}(C) 
\end{equation} 
for all $C\in\C$, with equality whenever $C$ is stochastically monotone. 
\end{theorem}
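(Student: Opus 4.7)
The plan rests on two standard features of a measure of concordance $\kappa$ (Definition~\refdefconcordance{}): monotonicity with respect to the pointwise order $C_1 \le C_2$ on $\C$, and the sign-flip identity $\kappa(\tilde C) = -\kappa(C)$ for the reflected copula $\tilde C(u,v) := v - C(1-u,v)$, which encodes $\kappa(-X,Y) = -\kappa(X,Y)$.

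The first step is to establish the pointwise domination $C(u,v) \le \ir{C}(u,v)$ for every $(u,v) \in [0,1]^2$. Fixing $v$ and writing $f(s) := \partial_1 C(s,v)$, the Hardy--Littlewood inequality applied to $f$ and $g = \indFunc{[0,u]}$ (which is its own decreasing rearrangement) gives
\[
\cInt{f(s)}{0}{u}{s} \;\le\; \cInt{f^*(s)}{0}{u}{s},
\]
which is precisely $C(u,v) \le \ir{C}(u,v)$. Since $\kappa$ respects the pointwise order on $\C$, this immediately yields $\kappa(C) \le \kappa(\ir{C}) = R_{\kappa}(C)$.

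For the lower bound I would apply the same reasoning to $\tilde C$. A direct computation gives $\partial_1 \tilde C(u,v) = \partial_1 C(1-u,v)$, so for each $v$ the functions $\partial_1 \tilde C(\cdot,v)$ and $\partial_1 C(\cdot,v)$ are equimeasurable and therefore share the same decreasing rearrangement; consequently $\ir{\tilde C} = \ir{C}$. The sign-flip identity then yields
\[
-\kappa(C) \;=\; \kappa(\tilde C) \;\le\; R_{\kappa}(\tilde C) \;=\; \kappa(\ir{\tilde C}) \;=\; R_{\kappa}(C),
\]
which combined with the upper bound gives $|\kappa(C)| \le R_{\kappa}(C)$, proving \eqref{rev1}.

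For the equality assertion, if $C \in \ir{\C}$ then Theorem~\ref{thm:rearrangement} gives $\ir{C} = C$, hence $R_{\kappa}(C) = \kappa(C)$; property (1.1) for $\kappa$ on $\ir{\C}$ forces $\kappa(C) \ge 0$, so $|\kappa(C)| = R_{\kappa}(C)$. If instead $C$ is stochastically decreasing, the identity $\partial_1 \tilde C(u,v) = \partial_1 C(1-u,v)$ shows that $\tilde C$ is stochastically increasing, so $\ir{C} = \ir{\tilde C} = \tilde C$ and $R_{\kappa}(C) = \kappa(\tilde C) = -\kappa(C) \ge 0$, again giving $|\kappa(C)| = R_{\kappa}(C)$. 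The only genuinely computational point is the pointwise domination $C \le \ir{C}$; once that is in hand, everything else reduces to the concordance axioms together with the equimeasurability of $\partial_1 C(\cdot,v)$ and $\partial_1 \tilde C(\cdot,v)$ under the reflection $u \mapsto 1-u$.
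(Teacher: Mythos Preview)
Your proof is correct and follows essentially the same route as the paper. The paper packages the lower bound via the stochastically decreasing rearrangement $\dr{C}(u,v) = v - \ir{C}(1-u,v)$, proving $\dr{C} \le C \le \ir{C}$ (Lemma~\ref{lemma:hardy_littlewood_polya_copula}) and then using $\kappa(\dr{C}) = -\kappa(\ir{C})$; you instead reflect $C$ first to $\tilde C$ and observe $\ir{\tilde C} = \ir{C}$, which is the same identity read in the opposite order (reflect-then-rearrange versus rearrange-then-reflect). The Hardy--Littlewood step and the handling of the equality case are identical.
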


\begin{remark}
\HD{The inequality  \eqref{rev1} connecting the underlying measure $\mu$ and $R_\mu$ can be extended beyond concordance measures.
Whenever the measure $\mu$ is ordered with respect to the pointwise ordering of copulas and fulfils for all random variables $X$ and $Y$
either $\mu(1-X, Y) = -\mu(X, Y)$ or $0 \leq \mu(X, Y)$, then $ \abs{\mu(X, Y)}    \leq R_\mu(X, Y)$.}
\end{remark}

\subsection{Data processing inequality and self-equitability}

Informally, the so-called data processing inequality states that a (random or functional) modification of the input data cannot increase the information contained in the data; see, e.g., \cite{Cover.2006} for an in-depth treatment of the data processing inequality in the context of information theory.

We assume in the following that the dependence measure $\mu$ is monotone on $\ir{\C}$ with respect to the pointwise order, i.e.\ we have
\begin{equation} \label{det12}
C_1 \leq C_2  \implies \mu(C_1) \leq \mu(C_2)    
\end{equation}
for all $C_1,C_2 \in \ir{\C}$. Note that this monotonicity condition holds for  many dependence measures. For example, \eqref{det12} is satisfied  for any  concordance measure (see Definition~\refdefconcordance{} for a precise definition), the Schweizer-Wolff measures $\sigma_p$ in \eqref{det11}  as well as the measures of complete dependence $\zeta_1$ and $r$ 
introduced in Remark \ref{rem1}.

\begin{proposition}[Data processing inequality] \label{prop:dpi}
Assume that the dependence measure $\mu$ satisfies \eqref{det12}, and let $X, Y, Z$ be continuous random variables  such that $Y$ and $Z$ are conditionally independent given $X$.
Then the data processing inequality
\begin{equation*}
	\RDmRV{Z}{Y}	\leq \RDmRV{X}{Y}
\end{equation*}
holds. 
In particular, $\RDmRV{f(X)}{Y}	\leq \RDmRV{X}{Y}$ holds for all\footnote{Note that for $\RDmRV{f(X)}{Y}$ to be well-defined, $f(X)$ needs to be a continuous random variable.} measurable functions $f$.
\end{proposition}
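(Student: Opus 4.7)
The plan is to reduce the inequality to a pointwise comparison of the two stochastically increasing rearrangements $\ir{C}_{Z,Y}$ and $\ir{C}_{X,Y}$, and then use the monotonicity assumption \eqref{det12}. Since $\ir{C}_{X,Y}, \ir{C}_{Z,Y} \in \ir{\C}$ by Theorem \ref{thm:rearrangement}, it suffices to establish
\begin{equation*}
\ir{C}_{Z,Y}(u,v) \leq \ir{C}_{X,Y}(u,v) \qquad \text{for all } (u,v) \in [0,1]^2.
\end{equation*}
I would then apply \eqref{det12} to obtain $R_\mu(Z,Y) = \mu(\ir{C}_{Z,Y}) \leq \mu(\ir{C}_{X,Y}) = R_\mu(X,Y)$.

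To prove the pointwise inequality, fix $v\in[0,1]$ and set $U := F_X(X)$, $V := F_Y(Y)$, $W := F_Z(Z)$, all of which are uniform on $[0,1]$. Define the functions $h(u) := \partial_1 C_{X,Y}(u,v)$ and $\tilde h(w) := \partial_1 C_{Z,Y}(w,v)$, so that by \eqref{det1} we have $h(U) = \mathbb{P}(V \leq v \mid U)$ and $\tilde h(W) = \mathbb{P}(V \leq v \mid W)$. The conditional independence assumption $Y \perp Z \mid X$ transfers (by strict monotonicity of the marginal c.d.f.s) to $V \perp W \mid U$, and hence $\mathbb{P}(V \leq v \mid U, W) = \mathbb{P}(V \leq v \mid U) = h(U)$. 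By the tower property,
\begin{equation*}
\tilde h(W) \;=\; \mathbb{E}\bigl[\mathbf{1}_{\{V \leq v\}} \mid W\bigr] \;=\; \mathbb{E}\bigl[\mathbb{P}(V \leq v \mid U, W) \mid W\bigr] \;=\; \mathbb{E}[h(U) \mid W].
\end{equation*}

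Jensen's inequality then yields $\tilde h(W) \leq_{\mathrm{cx}} h(U)$ in the convex order, and both random variables share the same mean $v$. By a classical result of Hardy-Littlewood-P\'olya (see Proposition~\refpropertiesdecreasingrearrangement{} and Definition~\refdefmajorization{} in the Supplementary Material), convex ordering of two $[0,1]$-indexed integrable functions with equal integrals is equivalent to majorization of their decreasing rearrangements, i.e.
\begin{equation*}
\cInt{\tilde h^*(s)}{0}{u}{s} \;\leq\; \cInt{h^*(s)}{0}{u}{s} \qquad \text{for all } u\in[0,1].
\end{equation*}
Recalling Definition~\ref{def2.2}, the left-hand side equals $\ir{C}_{Z,Y}(u,v)$ and the right-hand side equals $\ir{C}_{X,Y}(u,v)$, which is the desired pointwise inequality. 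The ``in particular'' statement follows because $Z = f(X)$ makes $Z$ measurable with respect to $\sigma(X)$, so that $Y$ and $Z$ are trivially conditionally independent given $X$.

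The main obstacle is step three, namely the passage from the conditional-expectation identity $\tilde h(W) = \mathbb{E}[h(U)\mid W]$ to the pointwise integral inequality on the rearrangements. The translation relies on the majorization characterization of convex order, which I would quote from the supplementary material; a subtler point is verifying that both $h(U)$ and $\tilde h(W)$ have equal mean $v$ (needed so that convex order is equivalent to majorization, not merely one direction of it), but this follows immediately from $\mathbb{E}[h(U)] = \mathbb{P}(V\leq v) = v = \mathbb{E}[\tilde h(W)]$.
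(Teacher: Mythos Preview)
Your argument is correct and reaches the same key inequality $\ir{C}_{Z,Y}\leq \ir{C}_{X,Y}$ as the paper, but by a somewhat different route. The paper invokes the copula Markov product: by Darsow's theorem the conditional independence gives $C_{ZY}=C_{ZX}*C_{XY}$, and then the operator $T_{C_{ZX}}$ (which is doubly stochastic) applied to $\partial_1 C_{XY}(\cdot,v)$ yields $\partial_1 C_{ZY}(\cdot,v)$, whence $\partial_1 C_{ZY}(\cdot,v)\preceq \partial_1 C_{XY}(\cdot,v)$ in the majorization order; integrating gives the pointwise comparison of the rearranged copulas. You instead work probabilistically: the tower property produces $\tilde h(W)=\mathbb{E}[h(U)\mid W]$, Jensen gives convex order, and the Hardy--Littlewood--P\'olya equivalence converts this (using the common mean $v$) into the same majorization/integral inequality. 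The two derivations are really the same phenomenon seen from different angles---``conditional expectation contracts in convex order'' is precisely the statement that a doubly stochastic operator decreases majorization---so neither is more general, but yours avoids citing the Markov-product machinery and Darsow's characterization at the cost of quoting the convex-order/majorization equivalence from the supplement. One minor point: you appeal to ``strict monotonicity of the marginal c.d.f.s'' to transfer conditional independence to $(U,V,W)$; continuity of the distributions only gives that $F_X$ is strictly increasing on the support of $X$, but this is enough to ensure $\sigma(U)=\sigma(X)$ modulo null sets, so the step goes through.
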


Similar to \cite[Proposition~2.1]{Geenens.2020}, the data processing inequality also immediately yields an asymmetric version of the so-called self-equitability introduced in \cite{Kinney.2014}. 

\begin{corollary} \label{cor:self_equitability}
Assume that $\mu$ satisfies \eqref{det12}.
If $f$ is a measurable function such that $X$ and $Y$ are conditionally independent given $f(X)$, then 
\begin{equation*}
	\RDmRV{f(X)}{Y} = \RDmRV{X}{Y}~.
\end{equation*}
In particular, $\RDmRV{g(X)}{Y} = \RDmRV{X}{Y}$ holds for all measurable bijections $g$. 
\end{corollary}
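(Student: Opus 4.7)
The plan is to obtain the equality by applying the data processing inequality (Proposition~\ref{prop:dpi}) twice, once in each direction, using the hypothesis of conditional independence symmetrically.

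For the bound $\RDmRV{f(X)}{Y}\le\RDmRV{X}{Y}$, I would feed Proposition~\ref{prop:dpi} the triple $(X,Y,f(X))$. The required conditional independence of $Y$ and $f(X)$ given $X$ is automatic: $f(X)$ is a measurable function of $X$, so conditionally on $X$ it is a.s.\ constant and hence trivially independent of $Y$. For the reverse bound $\RDmRV{X}{Y}\le\RDmRV{f(X)}{Y}$, I would apply Proposition~\ref{prop:dpi} to the triple $(f(X),Y,X)$; the required conditional independence of $Y$ and $X$ given $f(X)$ is precisely the standing hypothesis of the corollary. Combining the two inequalities gives the claimed equality. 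Throughout one tacitly assumes, as in the footnote of Proposition~\ref{prop:dpi}, that $f(X)$ is a continuous random variable so that $\RDmRV{f(X)}{Y}$ is well-defined.

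For the final sentence about bijections, I would argue that if $g$ is a measurable bijection (with measurable inverse, so that $\sigma(g(X))=\sigma(X)$), then $X$ is a measurable function of $g(X)$. Conditional on $g(X)$, the variable $X$ is therefore a.s.\ constant, and hence $X$ and $Y$ are trivially conditionally independent given $g(X)$. The main part of the corollary then applies with $f=g$ and yields $\RDmRV{g(X)}{Y}=\RDmRV{X}{Y}$.

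There is no genuine obstacle here: all the content is already contained in Proposition~\ref{prop:dpi}, and the proof amounts to recognizing that the hypothesis $X\perp Y\mid f(X)$ restores the direction missing from the generic data processing inequality. The only minor subtlety worth flagging explicitly is the measurability/continuity of $f(X)$ and $g(X)$, which is needed to ensure the rearranged measure is defined on those pairs.
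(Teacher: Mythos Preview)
Your proof is correct and follows essentially the same approach as the paper: apply Proposition~\ref{prop:dpi} once to get $\RDmRV{f(X)}{Y}\le\RDmRV{X}{Y}$, then invoke the conditional-independence hypothesis to apply it a second time in the reverse direction. The paper's proof is terser and does not spell out the bijection case separately, but your handling of it is fine; note that for a Borel bijection $g:\R\to\R$ the inverse is automatically Borel (Lusin--Souslin), so the parenthetical caveat about a measurable inverse is not actually needed.
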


Intuitively, Corollary~\ref{cor:self_equitability} states that, in a regression model $Y = f(X) + \epsilon$, the dependence measure $\RDm(X,Y)$ depends only on the strength of the noise $\epsilon$ and not on the specific form of $f$. 
A similar idea is illustrated in Figures~3 and 4  of \cite{Junker.2021}.

\subsection{Multivariate rearranged dependence measures} \label{section:multivariate}

In this section we explain how the rearrangement technique can be generalized to a  multivariate setting as follows.
For any measure $m$ on $[0, 1]^d$ and any Borel measurable function $f: [0,1]^d \to \R$, the {\it decreasing rearrangement}  $f^*:  [0,1] \to \mathbb{R} $
 of $f$  is defined by 
 \begin{equation*}
        f^*(t) := \inf \{x\in\mathbb{R} \mid m (\{\boldm{u} \in [0, 1]^d \mid f(\boldm{u}) > x \}) \leq t \} ~.
 \end{equation*}
As in the former case $d=1$, $f^*$ is always a decreasing (univariate) function.

Now, let $(\boldm{X},Y)$ denote a $(d+1)$-dimensional  random vector with  continuous distribution function $F$ and marginal distribution functions $F_{X^1}, \ldots, F_{X^d}$ and $F_Y$.
Using the disintegration approach introduced by \cite{Griessenberger.2022}, for each $(d+1)$-copula $C$ there exists a Markov kernel $K_C: [0, 1]^d \times \mathcal{B}([0, 1]) \rightarrow [0, 1]$ such that
\begin{equation*} \label{eqn:disint_markov}
	C(\boldm{u}, v) = \cInt{K_C(\boldm{s}, [0, v])}{[0, \boldm{u}]}{}{\mu_{C^{1 \cdots d}}(\boldm{s})} ~,
\end{equation*}
where $C^{1 \cdots d}(u_1, \ldots, u_d) := C(u_1, \ldots, u_d, 1)$ denotes the marginal copula with its induced measure $\mu_{C^{1 \cdots d}}$. 
Similar to Theorem~\ref{thm:rearrangement}, 
\begin{align}
\label{det101}
 \ir{C}(u, v) := \cInt{(K_C)^*(s, v)}{0}{u}{s}
 \end{align}
is again a bivariate stochastically increasing copula, where the rearrangement is applied to the measure $m=\mu_{C^{1 \cdots d}}$ and the function $\boldm{s} \mapsto K_C(\boldm{s}, [0, v])$ for every fixed $v \in [0, 1]$. Note that in the case $d=1$ the representation \eqref{det101} reduces to \eqref{det102} since $C^{1}(u_1) := C(u_1, 1)=u_1$ and $K_C({s}, [0, v]) =\partial_1 C(s,v)$ almost everywhere.

Given any bivariate dependence measure $\mu$, the rearranged dependence measure $R_{\mu}(C) := \mu(\ir{C})$ is now a multivariate measure of dependence in the sense that the multivariate versions
\begin{enumerate}
    \item[(M 1.1)] $0 \leq R_\mu(C) \leq 1$
    \item[(M 1.2)] $R_\mu(C) = 0$ if, and only if, $\boldm{X}$ and $Y$ are independent
    \item[(M 1.3)] $R_\mu(C) = 1$ if, and only if, $Y = f(\boldm{X})$ for some measurable function $f: \mathbb{R}^d \rightarrow \mathbb{R}$
\end{enumerate}
 of (1.1)--(1.3) hold for every $(d+1)$-copula $C$.  
 \HD{We finally note that the multivariate rearrangement and the induced rearranged dependence measures enjoy  many properties known from other multivariate measures of complete dependence. 
 For example, the multivariate rearrangement fulfils the information gain inequality    \begin{equation*}
         \ir{C}_{X_1, Y} \leq \ir{C}_{(X_1, X_2), Y} \leq \ldots \leq \ir{C}_{(X_1, \ldots, X_d) , Y} ~,
     \end{equation*}
and this also holds for $R_\mu(C)$ if the dependence measure  $\mu$  is monotone   with respect to the pointwise ordering of copulas.
Moreover, if  $X_2, \ldots, X_d$ and $Y$ are conditionally independent given $X_1$, we have
$    
        \ir{C}_{(X_1, \ldots, X_d), Y} = \ir{C}_{X_1, Y} .
$ 
}

\section{Approximation and estimation} 
 \def\theequation{3.\arabic{equation}}
\setcounter{equation}{0}
\label{sec3}

In general, the computation of the rearrangement of a function, and hence the computation of $\ir{C}$, may be a difficult task.
In this section, we discuss techniques to approximate $\ir{C}$ and $\RDm(C)$ and to estimate the rearranged dependence measure $R_\mu$ from a sample of 
independent and identically distributed observations $(X_1,Y_1) , \ldots , (X_n,Y_n)$. 
In principle, one would like to estimate the copula $C$ through a \enquote{smooth} statistic, say $\hat C_n$, and then apply Definition~\ref{def2.2} to calculate the rearrangement $\ir{\hat C_n}$  and the rearranged dependence measure
\begin{equation} \label{det0}
    \RDm (\hat C_n) = \mu ( \ir{\hat C_n} ).
\end{equation}
While various smooth estimators have been proposed \cite[see][among others]{Fermanian2004,chen2007,omelka2009,GENEST201782}, the simultaneous estimation of the rearrangement poses various difficulties.
We will now propose a simple solution to this problem.

Our approach is based on an  approximation scheme for $\ir{C}$ in the theoretical as well as empirical setting using  the concept of checkerboard copulas, thereby circumventing the need to treat partial derivatives explicitly. 
Checkerboard copulas are an important tool in statistical applications;
for a detailed discussion
we refer, among others, to  \cite{GENEST201782} and \cite{Junker.2021}. 
To be precise let $A = (a_{k\ell} )_{k = 1, \ldots , N_1}^{\ell  = 1, \ldots , N_2} \in \R^{N_1 \times N_2}$ 
denote a  matrix  with  entries $a_{k\ell}$ satisfying  
\begin{equation} \label{det20}
\begin{split}
    a_{k\ell} \geq 0 & \quad\text{ for all } k = 1, \ldots, N_1 \text{ and } ~\ell  = 1, \ldots, N_2~, \\ 
    \sum\limits_{k = 1}^{N_1} a_{k\ell} =  N_1  & \quad\text{ for all } \ell  = 1, \ldots, N_2 ~,\\
    \sum\limits_{\ell = 1}^{N_2} a_{k\ell} = N_2   & \quad\text{ for all } k = 1, \ldots, N_1 ~.
    \end{split}
\end{equation}
Then the function $\CBnn{A} : [0,1]^2 \to [0,1]$ defined by  
\begin{equation}
\label{hd0}
	\CBnn{A}(u, v) := 
	\sum\limits_{k, \ell = 1}^{N_1,N_2} a_{k \ell} \cInt{\indFunc{\left[\frac{k-1}{N_1}, \frac{k}{N_1}\right)}(s)}{0}{u}{s} \cInt{\indFunc{\left[\frac{\ell-1}{N_2}, \frac{\ell}{N_2}\right)}(t)}{0}{v}{t}
\end{equation}
is a copula and called the {\it checkerboard copula of the matrix  $A$. }
For a copula $C$ (see Definition~\refdefcopula{}) its {\it induced checkerboard copula}  is defined as 
\begin{equation} \label{det3}
\CBnn{C} := \CBnn{A_{N_1,N_2}} ~, ~
\end{equation}
where the elements of the doubly stochastic matrix $A_{N_1,N_2}$ are  given by 
\begin{equation} \label{det6}
	(A_{N_1,N_2})_{k\ell} :=  N_1 N_2 \cdot V_C \rbraces{\cbraces{\frac{k-1}{N_1}, \frac{k}{N_1}} \times \cbraces{\frac{\ell-1}{N_2}, \frac{\ell}{N_2}}} ~
\end{equation}
and $V_C (B) $ denotes the measure of the (Borel-)set $B \subset [0,1]^2$ induced by the copula $C$. 

Note that in contrast to most of the literature, we define a (empirical) checkerboard copula also for non-square matrices $A$ satisfying \eqref{det20}. For $N=N_1 = N_2$ the representation  \eqref{hd0}  essentially reduces, up to a scaling factor $N$, to the common definition 
based  on doubly stochastic square matrices  \citep[see][]{GENEST201782,Junker.2021}.
 The consideration of the rectangular case, however, is necessary to address asymmetric dependencies between $X$ and $Y$ 
resp.\ $Y$ and $X$.

We point out that the partial derivatives of the copula $\CBnn{A}$ in \eqref{hd0} are piecewise constant for fixed $v \in [0, 1]$ with
\begin{equation*}
    \partial_1  \CBnn{A } \rbraces{u, \frac{j}{N_2}}   = \frac{1}{N_2} \sum\limits_{\ell = 1}^{j} a_{k \ell} \quad \text{ for } 
        u \in \left[\frac{k-1}{N_1}, \frac{k}{N_1} \right) ~.
\end{equation*}
Thus, the (SI)-rearrangement satisfies $\ir{\CBnn{A}} = \CBnn{A}$ if and only if 
\begin{equation} \label{det5}
	\sum\limits_{j = 1}^\ell a_{k_2 j} \leq \sum\limits_{j = 1}^\ell a_{k_1 j} 
\end{equation}
for all $1\leq \ell \leq N_2$ and all $1 \leq k_1 \leq k_2 \leq N_1$.
In other words, $\ir{\CBnn{A}} = \CBnn{A}$ if and only if the rows of $A$ are ordered with respect to the majorization ordering of vectors \citep[see][]{Marshall.2011}.
This suggests the following Algorithm~\ref{alg1} for calculating the (SI)-rearrangement (as defined in  Definition~\ref{def2.2}) of an arbitrary checkerboard copula.  

\RestyleAlgo{ruled}
\begin{algorithm}[H] 
 \caption{Rearranged checkerboard copula}
\label{alg1}
\medskip
\SetAlgoLined
\KwData{matrix $A \in \mathbb{R}^{N_1 \times N_2}$ with  entries satisfying \eqref{det20}}
\KwResult{(SI)-rearrangement  $\ir{\CBnn{A}}$  of the checkerboard copula $\CBnn{A}$
}
\medskip
\begin{minipage}{.9\linewidth}%
\begin{enumerate} 
\item[(1)]  Calculate $B_k^\ell := \sum_{j = 1}^\ell a_{k j}$ and set $B_k^0 := 0$.
\item[(2)]  For every $\ell = 0, \ldots, N_2$, sort $B^\ell_k$ in a decreasing order and denote the result by $\widetilde{B}^\ell_k$.
\item[(3)] Calculate $\ir{a}_{k \ell}$ iteratively using 
	\begin{equation*}
		\ir{a}_{k \ell} 	
						:= \widetilde{B}^\ell_k - \widetilde{B}^{\ell-1}_k \geq 0  ~.
	\end{equation*}
\item[(4)] Define $\ir{A} :=(\ir{a}_{k\ell} )_{k = 1, \ldots , N_1}^{\ell  = 1, \ldots , N_2}$ and 
\begin{equation*} 
	\ir{\CBnn{A}} := \CBnn{\ir{A}} ~.
\end{equation*}
\end{enumerate}
\end{minipage}
\end{algorithm}

\begin{theorem} \label{thm:CB_algorithm}
For any matrix $A \in \R^{N_1 \times N_2}$ satisfying \eqref{det20},
the function $\ir{\CBnn{A}}$ defined in Algorithm \ref{alg1}
is the (SI)-rearrangement of  the checkerboard copula
$\CBnn{A}$. 
\end{theorem}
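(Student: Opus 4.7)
The plan is to verify the theorem in three stages: (i) show that the matrix $\ir{A}$ produced by Algorithm~\ref{alg1} satisfies the constraints~\eqref{det20}, so $\CBnn{\ir{A}}$ is a legitimate checkerboard copula; (ii) show that $\CBnn{\ir{A}}$ is stochastically increasing; and (iii) establish the pointwise identity $\CBnn{\ir{A}}=\ir{\CBnn{A}}$. For (i), nonnegativity $\ir{a}_{k\ell}=\widetilde{B}_k^\ell-\widetilde{B}_k^{\ell-1}\ge 0$ follows from $B_i^\ell\ge B_i^{\ell-1}$ (a consequence of $a_{i\ell}\ge 0$) together with the classical fact that $x_i\ge y_i$ for every $i$ forces the $k$-th largest of $(x_i)$ to dominate the $k$-th largest of $(y_i)$; the row sums telescope to $\widetilde{B}_k^{N_2}=N_2$ (using $B_i^{N_2}=N_2$ for all $i$) and the column sums telescope to $\sum_i B_i^\ell-\sum_i B_i^{\ell-1}=\ell N_1-(\ell-1)N_1=N_1$ since sorting preserves totals. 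For (ii), the discussion preceding Algorithm~\ref{alg1} reduces stochastic increasingness of a checkerboard copula to condition~\eqref{det5}, which holds for $\ir{A}$ because the row partial sums $\sum_{j=1}^\ell \ir{a}_{kj}$ telescope to $\widetilde{B}_k^\ell$, a decreasing sequence in $k$ by construction.

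Step (iii) is the heart of the proof. For $v\in[(j-1)/N_2,j/N_2]$ set $\alpha:=v-(j-1)/N_2$. Then $s\mapsto\partial_1\CBnn{A}(s,v)$ is piecewise constant on $[(k-1)/N_1,k/N_1)$ with value $v_k:=(1/N_2-\alpha)B_k^{j-1}+\alpha B_k^j$, and its decreasing rearrangement in $s$ takes value $\tilde v_k$ (the $k$-th largest among $v_1,\dots,v_{N_1}$) on the same interval. Integration gives $\ir{\CBnn{A}}(k/N_1,v)=(1/N_1)\sum_{i=1}^k\tilde v_i$, while a direct expansion of the checkerboard formula yields $\CBnn{\ir{A}}(k/N_1,v)=(1/N_1)\bigl[(1/N_2-\alpha)\sum_{i=1}^k\widetilde{B}_i^{j-1}+\alpha\sum_{i=1}^k\widetilde{B}_i^j\bigr]$. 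The theorem therefore reduces to
\begin{equation*}
\sum_{i=1}^{k}\tilde v_i \;=\; (1/N_2-\alpha)\sum_{i=1}^{k}\widetilde{B}_i^{j-1}+\alpha\sum_{i=1}^{k}\widetilde{B}_i^j
\end{equation*}
for every $k$ and every $\alpha\in[0,1/N_2]$; the extension to arbitrary $(u,v)$ then follows from the piecewise bilinear structure of both sides in $u$ and $v$.

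The main obstacle is precisely this final identity, because the permutations sorting $B^{j-1}$ and $B^j$ need not coincide, so Ky Fan's maximum principle (which characterises $\sum_{i=1}^k\tilde x_i$ as $\max_{|S|=k}\sum_{i\in S}x_i$) delivers only the inequality ``$\le$''. I would aim for equality by induction on $j$, propagating a common $k$-subset maximiser from level $j-1$ to level $j$ via the nonnegative increment $(a_{ij})_i$, while exploiting the column-sum identity $\sum_i B_i^\ell=\ell N_1$ and the row boundary $B_i^{N_2}=N_2$ built into~\eqref{det20}. A complementary attack uses the Hardy-Littlewood-P\'olya variational characterisation of the decreasing rearrangement reviewed in the supplementary material: rewrite $\sum_{i=1}^k\tilde v_i$ as the integral of $v$ against the indicator of an optimally chosen level set and then interchange that integration with the affine dependence on $\alpha$. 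Either route must then be combined with continuity of both sides in $(u,v)$ to lift the identity from the grid to all of $[0,1]^2$.
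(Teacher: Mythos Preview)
Your stages (i) and (ii) are correct and in fact more detailed than the paper's own argument, which simply asserts that the equality $\CBnn{\ir{A}}=\ir{\CBnn{A}}$ ``follows directly from the definition of Algorithm~\ref{alg1} and the characterisation~\eqref{det5}'' and then checks~\eqref{det20} by exactly the telescoping you give. Your unease about stage (iii), however, is well founded---and the situation is worse than you suspect: the partial-sum identity you reduce to is \emph{false} in general. Take $N_1=2$, $N_2=3$ and
\[
A=\begin{pmatrix}3/2 & 0 & 3/2\\ 1/2 & 2 & 1/2\end{pmatrix},
\]
which satisfies~\eqref{det20}. Then $B^1=(3/2,\,1/2)$ while $B^2=(3/2,\,5/2)$, so the sorting permutations at levels $1$ and $2$ genuinely differ. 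At $j=2$, $\alpha=1/6$ (i.e.\ $v=1/2$) your formula gives $v_1=v_2=1/2$ and hence $\tilde v_1=1/2$, but $(1/N_2-\alpha)\,\widetilde B_1^{\,1}+\alpha\,\widetilde B_1^{\,2}=\tfrac16\cdot\tfrac32+\tfrac16\cdot\tfrac52=\tfrac23$. Consequently $\ir{\CBnn{A}}(\tfrac12,\tfrac12)=\tfrac14$ whereas $\CBnn{\ir{A}}(\tfrac12,\tfrac12)=\tfrac13$. The Ky Fan inequality you cite is strict here, so neither an induction on $j$ nor the Hardy--Littlewood--P\'olya variational route can upgrade it to an equality: the two functions coincide at every grid vertex but differ in between whenever the orderings of $(B_k^{j-1})_k$ and $(B_k^{j})_k$ are incompatible, because the true rearrangement $\ir{\CBnn{A}}$ is then not piecewise linear in $v$ on the $N_2$-grid.

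What your computation at the endpoints $\alpha\in\{0,1/N_2\}$, combined with (i) and (ii), \emph{does} prove is that $\CBnn{\ir{A}}$ is the unique stochastically increasing checkerboard copula on the $N_1\times N_2$ grid agreeing with the true (SI)-rearrangement $\ir{\CBnn{A}}$ at all grid vertices $(k/N_1,\,j/N_2)$. That weaker statement is presumably what is needed downstream, but the literal assertion of Theorem~\ref{thm:CB_algorithm}---that the algorithm returns the (SI)-rearrangement of Definition~\ref{def2.2}---is not correct as written, and the paper's one-line justification does not address the gap you identified.
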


We now turn to the estimation of  the population dependence measure $\RDm(C) = \mu(\ir{C})$
from a sample of 
independent and identically distributed observations.
Because there exists in general no analytic expression  for $\RDm(C)$, this   is a challenging 
task and   we proceed in two steps. 
First, note that the population measure $\RDm(C)$ can be approximated  by $\RDm(\CBnn{C})$ using the induced checkerboard copula $\CBnn{C} $ of $C$ defined in \eqref{det3} since
\begin{equation} \label{det7} 
  \ir{\CBnn{C}}  \to \ir{C}
\end{equation}
where $ \ir{\CBnn{C}} $ denotes the  rearrangement of  $\CBnn{C} $.
Secondly, we replace the unknown weights in \eqref{det6} by corresponding estimates to obtain an empirical checkerboard copula, which is then rearranged by Algorithm \ref{alg1}.

We begin with the approximation of $\ir{C} $ by the rearranged induced checkerboard copula.
Since it is well known that the pointwise convergence is unable to capture complete dependence \citep[see][]{Mikusinski.1992}, we consider the finer metrics
\begin{equation*}
     D_p (C_1,C_2) :=  \rbraces{ \cInt{\cInt{\abs{\partial_1 C_1(u, v) - \partial_1 C_2(u, v)}^p}{0}{1}{u}}{0}{1}{v}}^{\frac{1}{p}}
    \end{equation*}
for $1 \leq p < \infty$ introduced in \cite{Trutschnig.2011}. 

\begin{theorem} \label{thm:consistency}
For any copula $C$, the rearranged induced checkerboard copula $ \ir{\CBnn{C}}$ converges to the rearranged copula $\ir{C}$ with respect to $D_p$, i.e.\  
\begin{equation*}
	D_p( \ir{\CBnn{C}}  , \ir{C}) \rightarrow 0 
\end{equation*}
 as $ N_1, N_2  \rightarrow \infty $. 
In particular, $\ir{\CBnn{C}}$ converges uniformly towards $\ir{C}$.
\end{theorem}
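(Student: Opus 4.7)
The plan is to reduce the convergence of the rearranged copulas to the convergence of the induced checkerboard copulas themselves, via the non-expansive property of the decreasing rearrangement. Specifically, the classical Chong-type inequality asserts that, for any two Borel measurable functions $f, g \in L^p[0,1]$,
\[
\| f^* - g^* \|_{L^p[0,1]} \leq \| f - g \|_{L^p[0,1]} .
\]
Applying this slicewise with $f(\cdot)=\partial_1 \CBnn{C}(\cdot,v)$ and $g(\cdot)=\partial_1 C(\cdot,v)$, raising to the $p$-th power and integrating over $v$ via Fubini, I obtain the key monotonicity
\[
D_p(\ir{\CBnn{C}}, \ir{C}) \leq D_p(\CBnn{C}, C) .
\]
Hence it suffices to prove that $D_p(\CBnn{C}, C) \to 0$ as $N_1, N_2 \to \infty$.

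For that step I would exploit the explicit piecewise structure of $\partial_1\CBnn{C}$. At the grid points $v=j/N_2$, the identity $C(k/N_1, v) - C((k-1)/N_1, v) = \int_{(k-1)/N_1}^{k/N_1}\partial_1 C(s,v)\,ds$ (shown earlier in the excerpt) reveals that $\partial_1\CBnn{C}(u, j/N_2)$ is exactly the local $u$-average of $\partial_1 C(\cdot, j/N_2)$ over the strip $[(k-1)/N_1, k/N_1)$ containing $u$. This is a conditional expectation with respect to the dyadic-like filtration generated by the $u$-partition, so by Doob's martingale convergence theorem (or, equivalently, by $L^p$-density of step functions) one obtains $\|\partial_1\CBnn{C}(\cdot,j/N_2) - \partial_1 C(\cdot,j/N_2)\|_p \to 0$ as $N_1 \to \infty$, for each fixed $j$. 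For intermediate $v \in [(j-1)/N_2, j/N_2)$, $\partial_1\CBnn{C}(u,v)$ is linear in $v$ within a strip, and both $v \mapsto \partial_1 C(s,v)$ (via the 1-Lipschitz property of $C$ in $v$ and dominated convergence) and $v \mapsto \partial_1\CBnn{C}(s,v)$ are controlled with error of order $1/N_2$ uniformly in $s$. Combining these two estimates yields $D_p(\CBnn{C}, C) \to 0$, hence $D_p(\ir{\CBnn{C}}, \ir{C}) \to 0$.

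Finally, to upgrade the $D_p$-convergence to uniform convergence of the copulas themselves, I would invoke equicontinuity: every copula is $1$-Lipschitz in each argument, so the family $\{\ir{\CBnn{C}}\}_{N_1,N_2}$ is equicontinuous on the compact square $[0,1]^2$. From
\[
\ir{\CBnn{C}}(u,v) - \ir{C}(u,v) = \cInt{ \bigl[(\partial_1\CBnn{C})^*(s,v) - (\partial_1 C)^*(s,v)\bigr]}{0}{u}{s} ,
\]
the $D_p$-convergence together with Fubini and extraction of subsequences delivers pointwise a.e. convergence on $[0,1]^2$. By equicontinuity and the Arzel\`a--Ascoli theorem, this yields uniform convergence along the subsequence, and uniqueness of the limit then forces the entire sequence $\ir{\CBnn{C}}$ to converge uniformly to $\ir{C}$.

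The main obstacle is the middle step: proving $D_p(\CBnn{C},C)\to 0$ rigorously. The delicate point is that $\partial_1\CBnn{C}$ is a clean conditional expectation of $\partial_1 C$ only at $v$-grid points, while between grid points it is a linear interpolation of two such averages. Controlling the $v$-interpolation error simultaneously with the $L^p$-convergence of the $u$-averages (and uniformly over $v$ when ultimately passing to sup-norm) requires careful bookkeeping; once dispatched, the remainder of the argument is driven by the non-expansive property of the rearrangement and a soft compactness argument.
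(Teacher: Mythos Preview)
Your proposal is correct and follows essentially the same route as the paper: reduce $D_p(\ir{\CBnn{C}},\ir{C})$ to $D_p(\CBnn{C},C)$ via the slicewise non-expansiveness of the decreasing rearrangement, and then invoke $D_p(\CBnn{C},C)\to 0$. The paper phrases the contraction step through the majorization order (their Theorem~\refproporderproperties{}(b) in the supplement), which is exactly the Chong-type inequality you cite, and for the checkerboard approximation step simply appeals to the literature (an extension of \cite[Thm.~4.5.8]{Durante.2015} to rectangular grids) rather than redoing the martingale/conditional-expectation argument you sketch; your direct argument is fine and what underlies that reference. For the ``in particular'' clause, the paper treats uniform convergence as an immediate corollary of $D_p$-convergence (a known fact for copulas, essentially the equicontinuity argument you wrote out), so your Arzel\`a--Ascoli route is correct but more explicit than needed.
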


In order to carry over the convergence of $\ir{C}_n$ to $\ir{C}$ and establish consistency of the estimator, we require that the underlying dependence measure $\mu$ is continuous on $\ir{\C}$ with respect to pointwise convergence, i.e.\ that
\begin{equation} \label{asmpt:approximation}
    C_n \rightarrow C   \implies \mu(C_n) \rightarrow \mu(C) 
\end{equation}
holds for all copulas $C_n, C\in \ir{\C}$. We point out that most classical measures are continuous in this sense. In fact, any concordance measure (see Definition~\refdefconcordance{}), the Schweizer-Wolff measures $\sigma_p$ in \eqref{det11}, as well as the measures of complete dependence $\zeta_1$ and $r$ in Remark \ref{rem1} fulfil our continuity condition\footnote{For $\zeta_1$ and $r$ this follows from \cite[Prop.~3.6]{Siburg.2021b}.}.

\begin{theorem}
If the dependence measure $\mu$ satisfies  \eqref{asmpt:approximation} then
\begin{equation*}
    \RDm(\CBnn{C})  \rightarrow \RDm(C) ~~~ \text{ as } N_1, N_2 \rightarrow \infty~.
\end{equation*}
\end{theorem}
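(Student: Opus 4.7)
The statement is a direct consequence of Theorem~\ref{thm:consistency} combined with the continuity assumption~\eqref{asmpt:approximation}, so my plan is essentially to chain these two results together and verify that the pieces fit.

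First, I would unfold the definitions: by construction, $\RDm(\CBnn{C}) = \mu(\ir{\CBnn{C}})$ and $\RDm(C) = \mu(\ir{C})$, so the claim reduces to showing $\mu(\ir{\CBnn{C}}) \to \mu(\ir{C})$. Next, I would invoke Theorem~\ref{thm:consistency}, which furnishes the convergence $\ir{\CBnn{C}} \to \ir{C}$ uniformly as $N_1, N_2 \to \infty$; in particular, this convergence is pointwise on $[0,1]^2$.

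The critical observation is that both $\ir{\CBnn{C}}$ and $\ir{C}$ belong to the class $\ir{\C}$ of stochastically increasing copulas, by Theorem~\ref{thm:rearrangement}. Therefore the pointwise convergence takes place entirely within $\ir{\C}$, so assumption~\eqref{asmpt:approximation} (continuity of $\mu$ on $\ir{\C}$ with respect to pointwise convergence) applies directly and yields
\begin{equation*}
    \mu(\ir{\CBnn{C}}) \longrightarrow \mu(\ir{C})
\end{equation*}
as $N_1, N_2 \to \infty$, which is the desired conclusion.

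There is no substantial obstacle: the technical work has already been absorbed into Theorem~\ref{thm:consistency}, and the continuity hypothesis is precisely tailored to convert convergence of the rearranged copulas into convergence of the rearranged dependence measure. The only minor point worth emphasising in the write-up is that one must remain inside $\ir{\C}$ in order to invoke~\eqref{asmpt:approximation}, which is guaranteed by Theorem~\ref{thm:rearrangement}.
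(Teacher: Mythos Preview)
Your proposal is correct and matches the paper's approach: the paper treats this theorem as an immediate consequence of Theorem~\ref{thm:consistency} together with assumption~\eqref{asmpt:approximation}, and does not even supply a separate proof in the appendix. Your write-up makes explicit the one point the paper leaves implicit, namely that Theorem~\ref{thm:rearrangement} ensures both $\ir{\CBnn{C}}$ and $\ir{C}$ lie in $\ir{\C}$ so that the continuity hypothesis applies.
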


Next, we consider  a  random sample of independent identically distributed observations $(X_1, Y_1)$, \ldots, $(X_n, Y_n)$. 
Similar to \cite{Li1998} and \cite{Junker.2021}, who considered the  case $N_1=N_2$, we define the empirical checkerboard copula with bandwidth $N_1, N_2 < n$ by 
\begin{align}
\label{hd01}
	 \hat{C}^\#_{N_1,N_2, n}  := C_{N_1,N_2}^\#\big(C_{n,n}^\#(\hat A_n)\big) ~,
\end{align}
where $\hat A_n = (\hat a_{ij})$ is the $n \times n$ permutation matrix defined by 
\begin{equation*}
	\hat a_{ij} := 	\begin{cases}
						1	&\text{ if there exists some $k$ with rank}(X_k) = i \text{ and rank}(Y_k) = j \\
						0	&\text{ else} 
					\end{cases} ~
\end{equation*} 
and $\mathrm{rank}(x_k)$ denotes the rank of  $x_k$  among  $x_{1}, x_{2} , \ldots ,  x_{n}$.
Finally, we define 
\begin{equation}
    \label{det51}
    \hat{R}_\mu := \RDm( \hat{C}^\#_{N_1,N_2, n})
\end{equation}
as an estimator of $R_\mu (C)$, which will be called {\it  rearranged $\mu$-estimate} throughout this paper.  The following result shows strong consistency of $  \hat{R}_\mu $.

\begin{theorem} \label{thm:estimation}
Assume that the dependence measure $\mu$ fulfils the assumption \eqref{asmpt:approximation}, and let $(X_1, Y_1)$, \ldots, $(X_n, Y_n)$ denote independent identically distributed 
random variables  with a  continuous distribution.
If   $N_1 := \lfloor n^{s_1} \rfloor$, $N_2 := \lfloor n^{s_2} \rfloor$  with $s_1,s_2 \in (0, 1/2)$, then
the estimator defined by
\eqref{det51} satisfies 
\begin{align*} 
\hat{R}_\mu
\rightarrow \RDm(C)	\text{ a.s. as } n \rightarrow \infty ~.
\end{align*}
\end{theorem}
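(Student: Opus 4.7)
The plan is to decompose the total error via a triangle inequality:
\begin{equation*}
\abs{\hat R_\mu - \RDm(C)} \;\leq\; \underbrace{\abs{\mu(\ir{\hat C^\#_{N_1,N_2,n}}) - \mu(\ir{\CBnn{C}})}}_{T_n^{\mathrm{est}}} \;+\; \underbrace{\abs{\mu(\ir{\CBnn{C}}) - \mu(\ir{C})}}_{T_n^{\mathrm{appr}}}.
\end{equation*}
The approximation term $T_n^{\mathrm{appr}}$ vanishes deterministically by the preceding theorem (which combines Theorem~\ref{thm:consistency} with the continuity assumption \eqref{asmpt:approximation}). Hence it suffices to show $T_n^{\mathrm{est}} \to 0$ almost surely.

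I would first establish entrywise almost-sure convergence of the matrices underlying the two checkerboards: writing $A_{N_1,N_2}$ as in \eqref{det6} and $\hat A_{N_1,N_2,n}$ for the corresponding matrix of $\hat C^\#_{N_1,N_2,n}$, the entries of $\hat A_{N_1,N_2,n}$ are (up to the scaling $N_1 N_2/n$) the number of sample pairs whose bivariate ranks fall into the $(k,\ell)$-th cell. The Dvoretzky--Kiefer--Wolfowitz inequality, or equivalently a standard Glivenko--Cantelli result for the bivariate empirical copula, gives a uniform deviation of order $O(\sqrt{\log n / n})$ between the empirical and true cell probabilities. Multiplying by $N_1 N_2 = \lfloor n^{s_1}\rfloor\lfloor n^{s_2}\rfloor$ and using $s_1 + s_2 < 1$, this bound is $o(1)$; a Borel--Cantelli step upgrades convergence in probability to almost sure. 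The sharper condition $s_i<1/2$ is also needed so that the grid cut-points $k/N_i$ are well separated compared with the $O(1/\sqrt n)$ fluctuations of the empirical quantiles implicit in the rank construction, so that cell assignments are eventually correct.

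Next I would argue that the map $A \mapsto \ir A$ produced by Algorithm~\ref{alg1} is continuous (indeed Lipschitz) in the matrix entries: the partial sums $B_k^\ell$ are linear in $A$, decreasing sorting is a $1$-Lipschitz operation on $\R^{N_1}$, and the final differences $\widetilde B_k^\ell - \widetilde B_k^{\ell-1}$ are again linear. Combined with the entrywise convergence above, this yields $\ir{\hat A_{N_1,N_2,n}} \to \ir{A_{N_1,N_2}}$ entrywise almost surely. Because both resulting rearranged copulas live on the same fixed $N_1 \times N_2$ cell partition and are piecewise bilinear with coefficients given by those matrix entries, entrywise convergence of the matrices directly transfers to uniform convergence $\ir{\hat C^\#_{N_1,N_2,n}} \to \ir{\CBnn{C}}$ on $[0,1]^2$ almost surely. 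Since by Theorem~\ref{thm:rearrangement} both limits belong to $\ir{\C}$, the continuity assumption \eqref{asmpt:approximation} applied on $\ir{\C}$ gives $T_n^{\mathrm{est}} \to 0$ almost surely, completing the argument.

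The main obstacle I anticipate is the empirical-process step: one must control the uniform convergence of the binned empirical copula over a grid whose size $N_1 N_2$ grows polynomially in $n$, while simultaneously accounting for the fact that $\hat A_n$ is built from ranks rather than the true marginals. The interplay of these two sources of error is precisely why the rate constraint $s_1,s_2 \in (0,1/2)$ is imposed, and making this argument rigorous (rather than the downstream Lipschitz-continuity and continuity-of-$\mu$ steps, which are more routine) is where the bulk of the work lies.
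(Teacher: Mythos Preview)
Your triangle-inequality decomposition and matrix-level Lipschitz argument take a genuinely different route from the paper, but the argument as stated has a real gap. You claim that the entrywise deviation of the checkerboard matrices is $O(N_1 N_2\sqrt{\log n/n})$ and then write ``using $s_1+s_2<1$, this bound is $o(1)$''. But $N_1 N_2\sqrt{\log n/n}\asymp n^{s_1+s_2-1/2}\sqrt{\log n}$, which tends to zero only when $s_1+s_2<1/2$; the theorem allows $s_1,s_2$ arbitrarily close to $1/2$, so $s_1+s_2$ can be close to $1$ and the entrywise bound diverges. In that regime your Lipschitz step for $A\mapsto\ir{A}$ and the downstream uniform-convergence claim break down. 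A second, smaller issue: you invoke \eqref{asmpt:approximation} to deduce $T_n^{\mathrm{est}}\to 0$ from $\lVert \ir{\hat C^\#_{N_1,N_2,n}}-\ir{\CBnn{C}}\rVert_\infty\to 0$, but \eqref{asmpt:approximation} is sequential continuity at a \emph{fixed} limit, not a modulus-of-continuity statement, so you cannot conclude $\mu(C_n)-\mu(D_n)\to 0$ from $C_n-D_n\to 0$ without first identifying a common limit.

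The paper avoids both difficulties by working directly in the metric $D_1$. It cites \cite{Junker.2021} for $D_1(\hat C^\#_{N_1,N_2,n},C)\to 0$ a.s.\ under exactly the hypothesis $s_1,s_2\in(0,1/2)$, and then uses the contraction inequality $D_1(\ir{C_1},\ir{C_2})\le D_1(C_1,C_2)$, established in the proof of Theorem~\ref{thm:consistency}, to get $D_1(\ir{\hat C^\#_{N_1,N_2,n}},\ir{C})\to 0$ immediately. Since $D_1$-convergence implies uniform convergence and both copulas lie in $\ir{\C}$, assumption \eqref{asmpt:approximation} applies to the fixed limit $\ir{C}$ and gives $\hat R_\mu\to R_\mu(C)$. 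This bypasses any matrix-entry bookkeeping, any Lipschitz constants that depend on the growing grid, and the intermediate term $\ir{\CBnn{C}}$ altogether; the ``bulk of the work'' you anticipate in the empirical-process step is exactly what is outsourced to the cited result.
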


\begin{remark}
\HD{  For big data applications the time complexity of the new estimators is   of importance. 
The calculation of the ranks requires $O(n\log(n))$ operations, while, in the absence of ties, the 
empirical checkerboard copula can be obtained by 
$O(n)$ operations. The rearrangement in Algorithm \ref{alg1} can be done by
$$
O(N_1 N_2) + O(N_2 N_1 \log(N_1)) + O(N_1 N_2) = O(n^{s_1} n^{s_2} \log(n^{s_1}) = O(n^{s_1+s_2} \log(n))~
$$
operations,  where $s_1 + s_2 < 1$. Naturally, the time complexity to compute the dependence measure depends on the underlying measure but oftentimes requires  $O(N_1N_2) = O(n^{s_1 + s_2})$
operations. As a result, for fixed $s_1,s_2$
the time complexity is of order $O(n\log(n))$, which coincides with the complexity of  Chatterjee's estimator.
}
\end{remark} 

\section{Finite sample properties} \label{sec4}

For a good performance of the estimate  $\hat{C}^\#_{N_1,N_2, n}  $,
an appropriate  choice of the bandwidths
$N_1,N_2$ will be crucial. These tuning parameters   depend
sensitively on  the form of the underlying unknown copula, and 
for  the finite sample illustrations presented below, we  will use  the following  cross validation principle, which is adapted from density estimation \citep[see, for example][]{rice1984}.

Recall the definition of the empirical checkerboard copula $ \hat{C}^\#_{ N_1,N_2,n }$, and denote its corresponding density by 
\begin{equation} \label{det10} 
\hat{c}_{ N_1,N_2,n } (u,v) := { \frac{\partial^2}{\partial u \partial v}} 
\hat{C}^\#_{ N_1,N_2,n } (u,v) ~.
\end{equation}
We define
$$
{\rm CV}({ N_1,N_2,n })  := \cInt{\cInt{\hat{c}_{ N_1,N_2,n }^2 (u,v)}{0}{1}{u}}{0}{1}{v}
- {\frac{2}{n}}  \sum_{i=1}^n 
\hat{c}_{ N_1,N_2,n-1 }^{-i}  (\hat U_i,\hat V_i)~,
$$
where $\hat{c}_{ N_1,N_2,n -1}^{-i} $ denotes  the estimator in \eqref{det10} calculated from the data  
$$
(X_1,Y_1), \ldots , (X_{i-1},Y_{i-1}),  (X_{i+1},Y_{i+ 1}), \ldots ,  (X_n,Y_n)~
$$
and $\hat U_i = {\frac{1}{n+1}}\sum_{j=1}^n 
I\{ X_j \leq X_i \} $
and $\hat V_i= {\frac{1}{n+1}}\sum_{j=1}^n 
I\{ Y_j \leq Y_i \}  $  are the normalized ranks of $X_i$ and $Y_i$ among  $X_1, \ldots , X_n$ and $Y_1, \ldots , Y_n$, respectively.
The data adaptive choice of the parameters
$N_1$ and $N_2$ is defined as the minimizer of ${\rm CV}(N_1,N_2, n) $ with respect to $N_1,N_2\in \{\lfloor n^{1/4} \rfloor  , \ldots , \lfloor n^{1/2} \rfloor  \}$.
\HD{
Note that, although consistency of the empirical checkerboard copula  holds for 
$\lfloor n^{s_i} \rfloor$ with $s_i \in (0,1/2) $ 
(see \cite{Junker.2021}),
we perform cross-validation  only for   $s_1, s_2 \in (1/4 , 1/2)$.
On the one hand, this saves computational time. 
On the other hand,  for $ s < 1/4$, the number of grid divisions is extremely small yielding almost no discernible influence on the outcome of the cross-validation procedure.  Moreover, in  cases, where  the set of possible bandwidths  is very  large, we   
calculate the minimizer on  the set $\{\lfloor n^{1/4} \rfloor  ,  \lfloor n^{1/4} \rfloor +2 , 
\ldots , \lfloor n^{1/2} \rfloor  \}$ in order to save further computational time.
 }

\subsection{Simulation study} \label{section:simulations}  

In this section, we present results from a simulation study   investigating  the performance of the   estimator $\hat{R}_\mu$ defined in \eqref{det51}. All simulations have been conducted using the statistical software \enquote{R} \citep[see][]{RCore.2021}
and are based on $1000$ replications in each scenario.
\HD{The code used in the simulation study can be found at \href{https://github.com/ChristopherStrothmann/RDM}{https://github.com/ChristopherStrothmann/RDM}.}
Therein, the package \enquote{qad}  \citep[see][]{qad.2022} was used in a slightly adapted form to calculate the  matrix $\hat A_n$, which is required for the definition of 
the empirical checkerboard copula in \eqref{hd01}.
As  sample   sizes  we considered  $n = 50, 100, 500$ and $1000$ and 
$N_1, N_2$ were chosen by the cross validation procedure described at the beginning of this section.

\subsubsection{Stochastically increasing  distributions}

We begin with a study  of the properties of the estimator 
\eqref{det51} in the rather special case where the underlying copula is 
stochastically increasing.  The corresponding samples  have been generated using the package \enquote{copula} 
\cite[see][]{Hofert.2020}. 
As for stochastically monotone copulas we have 
$R_\mu = \mu $,  we can calculate the dependence measure explicitly, and it is also reasonable to compare the 
new estimator $\hat{R}_\mu $ with commonly used  estimators of 
$\mu$.
\HD{The R-packages \enquote{XICOR} was used to estimate Chatterjee's coefficient $\hat{\xi}$ of $r$ (see \cite{Chatterjee.2020}) and \enquote{qad} (see \cite{qad.2022}) was used to estimate $\hat{\zeta}_1$ of Trutschnig's $\zeta_1$.}

The first two scenarios correspond to a $2$-dimensional (centred) normal distribution with correlation matrix 
\begin{equation}
 \label{det54}   
R = \begin{pmatrix}
1 \; &p \\ p &1
\end{pmatrix}
\end{equation}
where $p=0.25$ and $p=0.75$, respectively. 
Since for $p >0$, the corresponding copula, say $C_p$, is stochastically increasing, the rearranged Spearman's $\rho$ equals
\begin{equation*}
	R_\rho(C_p) 	= \RD(X, Y)	
				= \rho(X, Y)
				= \frac{6}{\pi} \arcsin \rbraces{\frac{{p}}{2}}
				~~~(p\geq 0)~,~~
\end{equation*}
while the rearranged Kendall's $\tau$ equals
\begin{equation*}
	\RDtau(C_r) 	= \RDtau(X, Y)	
				= \tau(X, Y)
				= \frac{2}{\pi} \arcsin \rbraces{{p}} 	~~~(p\geq 0)~.
\end{equation*}

The third example of a stochastically increasing  copulas is a member of both the Archimedean and extreme-value copula families, which are widely applied, both theoretically as well as empirically. 
More precisely, we consider a Gumbel copula defined by
\begin{equation*}  
    C_\theta^G (u, v) := \exp \rbraces{- \rbraces{(-\log u)^\theta + (-\log v)^\theta}^{{1}/{\theta}}} ~,
\end{equation*}
where   $\theta >1 $ denotes a  parameter. 
Since the Gumbel copula is an extreme-value copula, it is stochastically increasing, where the rearranged Spearman's $\rho$ and Kendall's $\tau$ are given by  
\begin{align*}
   R_\rho(C_{\theta}^G) =  {\rho(X, Y)} = 
   12 \cInt{\frac{1}{\rbraces{1+(t^\theta + (1-t)^\theta)^{1/\theta}}^2}}{0}{1}{t} - 3 \quad \text{ and } \quad
    \tau(C_{\theta}^G) = {\tau(X, Y)}=  \frac{\theta - 1}{\theta} ~.
\end{align*}

In Figure~\ref{fig:introduction_dependence1},  we show scatter plots of data generated from the two Gaussian
copulas ($p=0.25$, $p=0.75$)
and the Gumbel  copula  ($\theta = 3$), where the sample size is $n=500$. In  the upper part of  Table \ref{table:si_spearman},
 we present the simulated mean and standard deviation of the rearranged 
estimate $\hat R_\mu $, where $\mu $  is either Spearman's $\rho$ (left part) or  Kendall's $\tau$
(right part). Due to $R_\mu (C)= \mu (C)$ for the three scenarios, the  commonly used 
Spearman's rank correlation coefficient $\hat \rho$ and 
Kendall's rank correlation coefficient  $\hat \tau$ can also be used to estimate
$R_\rho (C) $ and $R_\tau (C) $, respectively. The  corresponding results  for these estimates are displayed in  Table \ref{table:si_spearman}
as well
(of course, in practice it is not known if the underlying copula is stochastically increasing).

\begin{table}
\begin{tabular}{c|c | ccc | ccc}
\toprule
\multirow{2}{*}{copula} &\multirow{2}{*}{$n$} &\multicolumn{3}{c}{Spearman's $\rho$ }  &\multicolumn{3}{|c}{Kendall's $\tau$} \\
 & &$\RDrho$ &$\hat{R}_\rho$ &$\abs{\hat{\rho}}$ &$\RDtau$ &$\hat{R}_\tau$ &$\abs{\hat{\tau}}$\\ 
\midrule
\multirow{4}{*}{$C_{0.25}$} &50&\multirow{4}{*}{0.239} &0.276 (0.132) &0.246 (0.129)&\multirow{4}{*}{0.161} &0.185 (0.090) &0.169 (0.090)\\ 
 &100 &  &0.263 (0.093) &0.236 (0.096) &  &0.176 (0.063) &0.160 (0.066)\\ 
 &500 &  &0.224 (0.043) &0.240 (0.043) &  &0.150 (0.029) &0.162 (0.029)\\ 
 &1000 &  &0.226 (0.030) &0.239 (0.029) &  &0.151 (0.020) &0.160 (0.020)\\ 
\midrule
\multirow{4}{*}{$C_{0.75}$} &50&\multirow{4}{*}{0.734} &0.669 (0.094) &0.721 (0.075)&\multirow{4}{*}{0.540} &0.473 (0.079) &0.538 (0.068)\\ 
 &100 &  &0.694 (0.063) &0.727 (0.051) &  &0.496 (0.055) &0.539 (0.047)\\ 
 &500 &  &0.714 (0.025) &0.732 (0.023) &  &0.517 (0.023) &0.539 (0.020)\\ 
 &1000 &  &0.723 (0.017) &0.734 (0.015) &  &0.527 (0.015) &0.540 (0.014)\\ 
\midrule
\multirow{4}{*}{$C_{3}^G$} &50 &\multirow{4}{*}{0.848} &0.803 (0.057) &0.839 (0.050)&\multirow{4}{*}{0.667} &0.599 (0.058) &0.668 (0.055)\\ 
 &100 &  &0.826 (0.040) &0.844 (0.037) &  &0.628 (0.044) &0.667 (0.041)\\ 
 &500 &  &0.844 (0.016) &0.848 (0.015) &  &0.653 (0.019) &0.666 (0.017)\\ 
 &1000 &  &0.847 (0.011) &0.848 (0.010) &  &0.659 (0.012) &0.666 (0.012)\\ 
 \midrule
 \multirow{2}{*}{copula} &\multirow{2}{*}{$n$} &\multicolumn{3}{c}{$r$} &\multicolumn{3}{c}{$\zeta_1$} \\
 & &$r$ &$\hat{R}_{r}$ &$\hat{\xi}$ &$\zeta_1$ &$\hat{R}_{\zeta_1}$ &$\hat{\zeta_1}$\\ 
\midrule
\multirow{4}{*}{$C_{0.25}$} &50&\multirow{4}{*}{0.030} &0.070 (0.054) &0.076 (0.058)&\multirow{4}{*}{0.170} &0.236 (0.102) &0.323 (0.065)\\ 
 &100 &  &0.060 (0.036) &0.058 (0.043) &  &0.222 (0.071) &0.307 (0.050)\\ 
 &500 &  &0.035 (0.013) &0.038 (0.025) &  &0.173 (0.032) &0.249 (0.024)\\ 
 &1000 &  &0.035 (0.009) &0.035 (0.020) &  &0.172 (0.024) &0.231 (0.019)\\ 
\midrule
\multirow{4}{*}{$C_{0.75}$} &50&\multirow{4}{*}{0.360} &0.333 (0.087) &0.330 (0.092)&\multirow{4}{*}{0.560} &0.557 (0.077) &0.580 (0.065)\\ 
 &100 &  &0.342 (0.060) &0.344 (0.066) &  &0.560 (0.054) &0.589 (0.047)\\ 
 &500 &  &0.347 (0.026) &0.355 (0.030) &  &0.560 (0.025) &0.583 (0.022)\\ 
 &1000 &  &0.348 (0.018) &0.355 (0.021) &  &0.560 (0.017) &0.577 (0.016)\\ 
\midrule
\multirow{4}{*}{$C_3^G$} &50&\multirow{4}{*}{0.520} &0.483 (0.072) &0.490 (0.083)&\multirow{4}{*}{0.690} &0.676 (0.059) &0.682 (0.053)\\ 
 &100 &  &0.502 (0.055) &0.506 (0.059) &  &0.686 (0.045) &0.696 (0.041)\\ 
 &500 &  &0.513 (0.024) &0.516 (0.025) &  &0.690 (0.019) &0.700 (0.018)\\ 
 &1000 &  &0.518 (0.016) &0.519 (0.019) &  &0.693 (0.013) &0.700 (0.013)\\ 
\midrule
\end{tabular}
\caption{ \it \HD{Simulated mean and standard deviation of various estimators of dependence measures. Upper part:
rearranged   Spearman's $\rho$ estimate $\hat R_\rho$, (absolute) Spearman's rank correlation coefficient $|\hat \rho |$ (left part),  
rearranged   Kendall's $\tau$ estimate $\hat R_\tau$ and (absolute) Kendall's rank correlation coefficient $|\hat \tau |$ (right part). 
Lower part: Siburg-Dette-Stoimenov $r$ (i.e. Chatterjee's $\xi$)
estimate $\hat{R}_r$, Chatterjee's estimator $\hat{\xi}$ (left part),   Trutschnig $\zeta_1$ estimate $\hat{R}_{\zeta_1}$ and $\hat{\zeta_1}$ (right part).  The distribution of $(X,Y)$ is given by a centred normal with correlation matrix
\eqref{det54} with copula $C_p$ and by a Gumbel copula $C_{3}^{G}$.}
}
\label{table:si_spearman}
\end{table}

We observe a reasonable behaviour of all rearranged estimates, which improves with increasing sample size. 
In general, there are only minor differences between the rearranged estimates
$\hat R_\rho$, $\hat R_\tau$ and the  non-rearranged estimates  $\hat\rho$, $\hat  \tau$, which are mainly caused by a slightly smaller bias
of the  non-rearranged estimates. 
For the Gaussian copula with correlation $0.25$, the rearranged estimates $\hat R_\rho$ and $\hat R_\tau$ slightly overestimate their population version $ R_\rho$ and $R_\tau$ if the sample size is $n=50$ or $100$. 
For all other scenarios, we observe an underestimation.
\HD{ The lower part of Table \ref{table:si_spearman}  shows some results for the complete dependence   measures $r$  and $\zeta_1$.
We observe  that  the estimator $\hat{R}_r$ and Chatterjee's estimator 
$\hat{\xi}$ behave very similar. On the other  the estimator $\hat{R}_{\zeta_1}$ seems to have a smaller bias than 
$\hat{\zeta_1}$ but at the cost of a larger variance. }

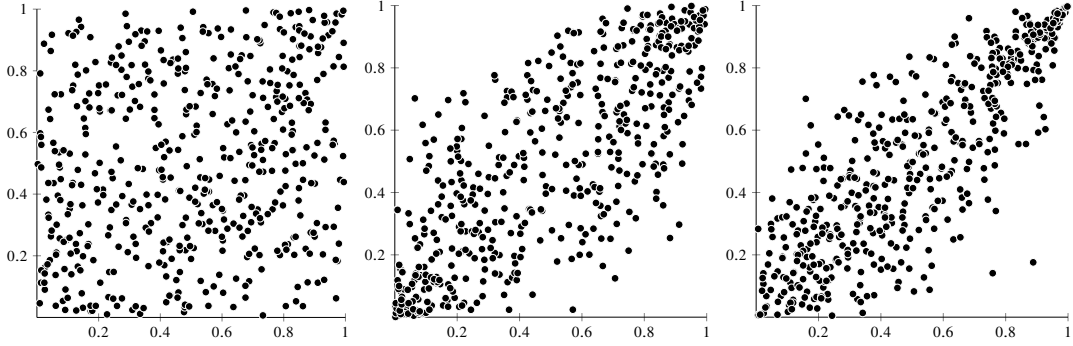
\begin{figure}
\begin{subfigure}[b]{0.33\linewidth}
\centering
\resizebox{\linewidth}{!}{
\begin{tikzpicture}
	\begin{axis}[xmin=0, ymin=0, xmax = 1, ymax=1, axis equal image, axis lines=middle, inner axis line style={-}]
		\addplot[only marks, mark options={draw=white, scale = 1}] table [header=false,x index=0,y index=1, col sep=comma] {scatterplot/normal0.25.csv}; 
	\end{axis}
\end{tikzpicture}}
\end{subfigure}\hfill%
\begin{subfigure}[b]{0.33\linewidth}
\resizebox{\linewidth}{!}{\begin{tikzpicture}
	\begin{axis}[xmin=0, ymin=0, xmax = 1, ymax=1, axis equal image, axis lines=middle, inner axis line style={-}]
		\addplot[only marks, mark options={draw=white, scale = 1}] table [header=false,x index=0,y index=1, col sep=comma] {scatterplot/normal0.75.csv}; 
	\end{axis}
\end{tikzpicture}}
\end{subfigure}\hfill%
\begin{subfigure}[b]{0.33\linewidth}
\resizebox{\linewidth}{!}{\begin{tikzpicture}
	\begin{axis}[xmin=0, ymin=0, xmax = 1, ymax=1, axis equal image, axis lines=middle, inner axis line style={-}]
		\addplot[only marks, mark options={draw=white, scale = 1}] table [header=false,x index=0,y index=1, col sep=comma] {scatterplot/gumbel3.csv}; 
	\end{axis}
\end{tikzpicture}}
\end{subfigure}
\caption{\it Scatter plots of data (sample size $n=500$) from the Gaussian copula with correlation $r=0.25$ (left panel), $r=0.75$ (middle panel) and the Gumbel copula with parameter $\theta=3$ (right panel).}
\label{fig:introduction_dependence1}
\end{figure}

\bigskip

\subsubsection{A family of non-stochastically monotone distributions}

In this section, we consider the more common situation where $R_\mu \not = \mu $. 
To generate data from a family of $2$-dimensional distributions with different degrees of dependence, let $X\sim U(0, 1) $ denote a uniformly (on the interval $[0,1]$) distributed 
random variable and $Z \sim \mathcal{N}(0, 1)$  a standard normal 
distributed  random variable such that $X$ and $Z$ are independent.
We consider the regression model
\begin{equation}
\label{det53}
    Y := (X-1/2)^2 + \sigma Z~,
\end{equation}
where $\sigma$ is a non-negative constant. \HD{Note that the correlation between $X$ and $Y$ is $0$, by construction and that 
a similar model has been studied in \cite{Chatterjee.2020}.}
Model \eqref{det53} contains  perfect functional dependence  of $X$ and $Y$ (for $\sigma=0$) 
and independence in the limit for  $\sigma \to \infty$.
The corresponding scatter plots from   $n=500$  independent  observations 
according to model \eqref{det53}
with  $\sigma = 0$, $0.1$ and $0.3$ are displayed in Figure \ref{fig2}, 
while the upper part of Table \ref{table:general_spearman} shows the simulated mean and standard deviation 
of the estimates $\hat R_\rho $ (for the rearranged Spearman's $\rho$)
and $\hat R_\tau $ (for the rearranged Kendall's $\tau$).
For $\sigma > 0 $ the ``true'' values of $R_\rho$ and $R_\tau$ have been obtained by 
simulation using a sample of size
$n=1 000 000$ and bandwidths $N_1=N_2 = \lfloor n^{0.45} \rfloor$. 
The 
empirical results confirm the consistency statement in Theorem \ref{thm:estimation}.
In the table, we also display the 
simulated mean of the non-rearranged estimators $|\hat \rho| $
and $|\hat \tau | $, which 
do not yield reasonable results.
\HD{In the lower part of Table \ref{table:general_spearman}  we show
again some results for the complete dependence   measures $r$  and $\zeta_1$.
For $\sigma =0.1$  and $\sigma =0.3$ 
the differences between the  estimators $\hat{R}_r$ and 
$\hat{\xi}$ are again very small. However for complete dependence ($\sigma=0$) 
Chatterjee's estimator yields a better performance.  In this case the  estimators $\hat{R}_{\zeta_1}$  and  $\hat{\zeta_1}$ 
exhibit a similar behaviour, while with increasing $\sigma$ the estimator $\hat{\zeta_1}$  has a larger bias than 
$\hat{R}_{\zeta_1}$   (but a slightly
smaller variance).}

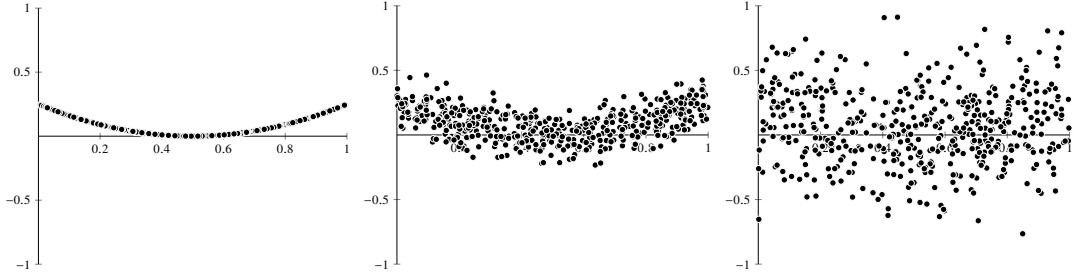
\begin{figure}
\begin{subfigure}[b]{0.33\linewidth}
\centering
\resizebox{\linewidth}{!}{
\begin{tikzpicture}
	\begin{axis}[xmin=0, xmax = 1, ymin=-1, ymax=1, axis lines=middle, inner axis line style={-}]
		\addplot[only marks, mark options={draw=white, scale = 1}] table [header=false,x index=0,y index=1, col sep=comma] {scatterplot/squared0.csv}; 
	\end{axis}
\end{tikzpicture}}
\end{subfigure}\hfill%
\begin{subfigure}[b]{0.33\linewidth}
\resizebox{\linewidth}{!}{\begin{tikzpicture}
	\begin{axis}[xmin=0, xmax = 1, ymin=-1, ymax=1, axis lines=middle, inner axis line style={-}]
		\addplot[only marks, mark options={draw=white, scale = 1}] table [header=false,x index=0,y index=1, col sep=comma] {scatterplot/squared0.1.csv}; 
	\end{axis}
\end{tikzpicture}}
\end{subfigure}\hfill%
\begin{subfigure}[b]{0.33\linewidth}
\resizebox{\linewidth}{!}{\begin{tikzpicture}
	\begin{axis}[xmin=0, xmax = 1, ymin=-1, ymax=1, axis lines=middle, inner axis line style={-}]
		\addplot[only marks, mark options={draw=white, scale = 1}] table [header=false,x index=0,y index=1, col sep=comma] {scatterplot/squared0.3.csv}; 
	\end{axis}
\end{tikzpicture}}
\end{subfigure}
\caption{ \it Scatter plots of a sample of $n= 500$ observations from model \eqref{det53}.
Left panel: $\sigma =  0$; middle  panel: $\sigma =  0.1$;  right panel: $\sigma =  0.3$.}
\label{fig2}
\end{figure}

\begin{table}
\begin{tabular}{c|c | ccc | ccc}
\toprule
\multirow{2}{*}{$\sigma$} &\multirow{2}{*}{$n$} &\multicolumn{3}{c}{Spearman's $\rho$} &\multicolumn{3}{|c}{Kendall's $\tau$} \\
 & &$\RDrho$ &$\hat{R}_\rho$ &$\abs{\hat{\rho}}$ &$\RDtau$ &$\hat{R}_\tau$ &$\abs{\hat{\tau}}$\\ 
\midrule
\multirow{4}{*}{0} &50&\multirow{4}{*}{1} &0.918 (0.012) &0.155 (0.120)&\multirow{4}{*}{1} &0.732 (0.021) &0.133 (0.102)\\ 
 &100 &  &0.960 (0.005) &0.105 (0.078) &  &0.810 (0.013) &0.092 (0.069)\\ 
 &500 &  &0.992 (0.000) &0.048 (0.036) &  &0.914 (0.002) &0.041 (0.031)\\ 
 &1000 &  &0.996 (0.000) &0.032 (0.025) &  &0.939 (0.001) &0.028 (0.022)\\ 
\midrule
\multirow{4}{*}{0.1} &50&\multirow{4}{*}{0.580} &0.530 (0.116) &0.131 (0.094)&\multirow{4}{*}{0.404} &0.362 (0.085) &0.092 (0.066)\\ 
 &100 &  &0.550 (0.081) &0.091 (0.068) &  &0.378 (0.060) &0.063 (0.047)\\ 
 &500 &  &0.553 (0.035) &0.042 (0.031) &  &0.381 (0.026) &0.029 (0.021)\\ 
 &1000 &  &0.559 (0.024) &0.030 (0.022) &  &0.386 (0.018) &0.020 (0.015)\\ 
\midrule
\multirow{4}{*}{0.3} &50&\multirow{4}{*}{0.232} &0.255 (0.143) &0.113 (0.085)&\multirow{4}{*}{0.155} &0.171 (0.096) &0.078 (0.058)\\ 
 &100 &  &0.258 (0.098) &0.081 (0.059) &  &0.173 (0.066) &0.055 (0.040)\\ 
 &500 &  &0.216 (0.047) &0.037 (0.027) &  &0.145 (0.032) &0.025 (0.018)\\ 
 &1000 &  &0.217 (0.033) &0.026 (0.020) &  &0.146 (0.022) &0.017 (0.013)\\ 
\midrule
\multirow{2}{*}{$\sigma$} &\multirow{2}{*}{$n$} &\multicolumn{3}{c}{$r$} &\multicolumn{3}{c}{$\zeta_1$} \\
 & &$r$ &$\hat{R}_{r}$ &$\hat{\xi}$ &$\zeta_1$ &$\hat{R}_{\zeta_1}$ &$\hat{\zeta_1}$\\ 
\midrule
\multirow{4}{*}{0} &50&\multirow{4}{*}{1} &0.667 (0.026) &0.885 (0.002)&\multirow{4}{*}{1} &0.817 (0.018) &0.823 (0.015)\\ 
 &100 &  &0.765 (0.015) &0.941 (0) &  &0.875 (0.010) &0.878 (0.009)\\ 
 &500 &  &0.892 (0.004) &0.988 (0) &  &0.945 (0.002) &0.945 (0.002)\\ 
 &1000 &  &0.923 (0.002) &0.994 (0) &  &0.961 (0.001) &0.961 (0.001)\\ 
\midrule
\multirow{4}{*}{0.1} &50&\multirow{4}{*}{0.22} &0.208 (0.080) &0.202 (0.094)&\multirow{4}{*}{0.46} &0.43 (0.098) &0.481 (0.073)\\ 
 &100 &  &0.215 (0.059) &0.209 (0.071) &  &0.448 (0.066) &0.487 (0.054)\\ 
 &500 &  &0.206 (0.025) &0.215 (0.032) &  &0.44 (0.030) &0.478 (0.026)\\ 
 &1000 &  &0.208 (0.017) &0.217 (0.023) &  &0.444 (0.021) &0.474 (0.018)\\ 
\midrule
\multirow{4}{*}{0.3} &50&\multirow{4}{*}{0.04} &0.068 (0.057) &0.074 (0.057)&\multirow{4}{*}{0.19} &0.223 (0.108) &0.322 (0.066)\\ 
 &100 &  &0.055 (0.036) &0.058 (0.043) &  &0.212 (0.074) &0.3 (0.050)\\ 
 &500 &  &0.033 (0.013) &0.037 (0.024) &  &0.178 (0.035) &0.249 (0.025)\\ 
 &1000 &  &0.032 (0.010) &0.033 (0.020) &  &0.176 (0.026) &0.23 (0.020)\\ 
\midrule
\end{tabular}
\caption{\it  
\HD{ 
Simulated mean and standard deviation of various estimators of dependence measures. Upper part: 
rearranged   Spearman's $\rho$ estimate $\hat R_\rho$, (absolute) Spearman's rank correlation coefficient $|\hat \rho |$ (left part),  
rearranged   Kendall's $\tau$ estimate $\hat R_\tau$, (absolute) Kendall's rank correlation coefficient $|\hat \tau |$ (right part). 
Lower part:  Siburg-Dette-Stoimenov $r$ (i.e. Chatterjee's $\xi$)
estimate $\hat{R}_r$, Chatterjee's estimator $\hat{\xi}$ (left part), Trutschnig $\zeta_1$ estimate $\hat{R}_{\zeta_1}$ and $\hat{\zeta_1}$ (right part). The distribution of $(X,Y)$ is given by model 
\eqref{det53}. }}
\label{table:general_spearman}
\end{table}

\bigskip




\subsection{Power analysis} \label{section:power-analysis}

\HD{In this section we compare 
different dependence measures 
when they are applied for independence testing, that is
\begin{equation*}
    H_0: X ~{\rm and  } ~ Y  ~~ {\rm are~ independent} 
\end{equation*}
For modeling the dependence structure we consider a Gaussian copula $C_p$ with $p \in [0, 1]$ and the copula corresponding  to the non-stochastically monotone distribution in model \eqref{det53} with parameter $1/\sigma $. 
All results are based
on  a sample of size  $n = 200$ and 
$2000$ simulation runs are used to calculate the rejection probabilities.
\\
We consider tests based on estimators
for the rearranged 
Spearman's $\rho$ and rearranged Kendall's $\tau$. 
 As benchmark  we also study the  tests based on 
${\zeta}_1$  (see \cite{Trutschnig.2011, Junker.2021})
and $r$  (see \cite{Chatterjee.2020,Dette.2012}).
For the tests based on $R_\rho$, $R_\tau$ and $\zeta_1$ we use a permutation test  with $1000$ permutations. For the test based on $r$ we use  Chatterjee's  test  with quantiles from the asymptotic distribution.  The simulated rejection probabilities  are displayed in Figure 
 \ref{fig:power-analysis-200-model}  for the Gaussian copula (left panel)  and   model \eqref{det53} (right panel). 
 Note that the case of independence corresponds to the choice $p=0$ and $1/\sigma \rightarrow 0 $,
 while complete dependence is obtained for  
 $p=1$ and $1/\sigma =  \infty$.
 In this case, the  nominal level is well approximated by all $4$  tests. 
With increasing $p$  and  increasing  $1/ \sigma $  we model more dependence and we can study the power of the different tests.  We observe 
no differences  in the  power of the tests based on $R_\rho$, $R_\tau$ and $\zeta_1$. However, all tests outperform Chatterjee's test.
}

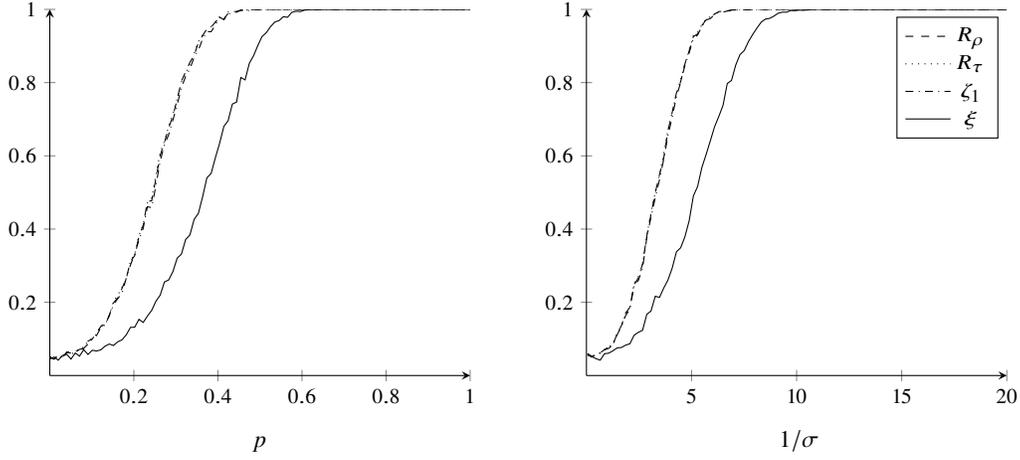
\begin{figure}
\captionsetup[subfigure]{justification=centering, labelformat=empty}
\begin{subfigure}[b]{0.5\linewidth}
\centering
\begin{tikzpicture}
\begin{axis}[axis lines = middle, ymin=0, ymax = 1, xmin=0, xmax=1, width=\linewidth, height=0.9\linewidth]
    \addplot[mark options={}, dashed] table [x index=0,y index=1, col sep=comma] {Power-Analysis/gaussian-200.csv};   
    \addplot[mark options={}, dotted] table [x index=0,y index=2, col sep=comma] {Power-Analysis/gaussian-200.csv};   
    \addplot[mark options={}, dashdotted] table [x index=0,y index=3, col sep=comma] {Power-Analysis/gaussian-200.csv};  
    \addplot[mark options={}, solid] table [x index=0,y index=5, col sep=comma] {Power-Analysis/gaussian-200.csv};  
\end{axis}
\end{tikzpicture}
\caption{~~~~~ $p$} 
\end{subfigure}\hfill
\begin{subfigure}[b]{0.5\linewidth}
\centering
\begin{tikzpicture}
\begin{axis}[axis lines = middle, ymin=0, ymax = 1, xmin=0, xmax=20, width=\linewidth, height=.9\linewidth]
    \addplot[mark options={}, dashed] table [x index=0,y index=1, col sep=comma] {Power-Analysis/squared-200.csv};   
    \addplot[mark options={}, dotted] table [x index=0,y index=2, col sep=comma] {Power-Analysis/squared-200.csv};   
    \addplot[mark options={}, dashdotted] table [x index=0,y index=3, col sep=comma] {Power-Analysis/squared-200.csv};  
    \addplot[mark options={}, solid] table [x index=0,y index=5, col sep=comma] {Power-Analysis/squared-200.csv};  
    \addlegendentry{$R_\rho$}
	\addlegendentry{$R_\tau$}
	\addlegendentry{$\zeta_1$}
	\addlegendentry{$\xi$}
\end{axis}
\end{tikzpicture}
\caption{~~~~~ $1/\sigma $} 
\end{subfigure}
  \caption{\it 
  \HD{Simulated rejection probabilities of  permutation tests  based on 
  different dependence measures. 
  Left panel: the Gaussian copula 
  with correlation $p \in [0,1]$. 
  Right panel:
the non-stochastically monotone family 
defined by  \eqref{det53}, where the 
the parameter $\sigma$ satsifies $1/\sigma \in [0.05, 20]$. The sample size is $n=200$
and the significance level is $\alpha = 0.05\%$.}
\label{fig:power-analysis-200-model}
}
\end{figure}

\subsection{Data example}
\label{sec42} 

In this section we briefly revisit a  data example which  was investigated by \cite{Chatterjee.2020}
to study the performance of his  correlation coefficient in the analysis of  yeast gene expression data.  The data 
consists of  the  expressions of $6223$ yeast genes and  was originally analyzed 
by  \cite{spellman1998} who  tried 
to identify genes whose transcript levels oscillate during the cell cycle. 
For each gene, the gene expression was 
observed at $23$ time points. Because the number of genes is large, visual inspection is not possible
and  \cite{Reshef2011} proposed to use the MIC and MINE correlation coefficient to analyze the data.
\cite{Chatterjee.2020} compared the performance of his correlation coefficient with these measures and 
demonstrated some advantages of his approach.
We will now provide a brief 
illustration  analyzing this type of data
with a rearranged dependence measure to demonstrate the ability of our approach to 
also detect non-monotone dependencies.
We begin with  an  analysis  
of the rearranged  Spearman's rank coefficient $\hat R_\rho$. After that, we provide a very brief comparison of $\hat R_\rho$ with Chatterjee's correlation coefficient.

\begin{figure}
\captionsetup[subfigure]{justification=centering, labelformat=empty}
\begin{subfigure}[b]{0.5\linewidth}
\centering
\begin{tikzpicture}
\begin{axis}[tick style={draw=none}, xticklabels={}, yticklabels={}, width=\linewidth, height=0.63\linewidth]
    \addplot[only marks, mark options={draw=white, scale = .75}] table [header=false,x index=0,y index=1, col sep=comma] {Genes-10000/RDM-Spearman_005_005_1_YBL003C.csv}; 
    \addplot[mark options={}, dashed] table [header=false,x index=0,y index=1, col sep=comma] {Genes-10000/RDM-Spearman_005_005_1_YBL003C_line.csv};   
\end{axis}
\end{tikzpicture}
\caption{YBL003C} 
\end{subfigure}\hfill
\begin{subfigure}[b]{0.5\linewidth}
\centering
\begin{tikzpicture}
\begin{axis}[tick style={draw=none}, xticklabels={}, yticklabels={}, width=\linewidth, height=0.63\linewidth]
    \addplot[only marks, mark options={draw=white, scale = .75}] table [header=false,x index=0,y index=1, col sep=comma] {Genes-10000/RDM-Spearman_005_005_2_YBL009W.csv}; 
    \addplot[mark options={}, dashed] table [header=false,x index=0,y index=1, col sep=comma] {Genes-10000/RDM-Spearman_005_005_2_YBL009W_line.csv};   
\end{axis}
\end{tikzpicture}
\caption{YBL009W}
\end{subfigure}
\begin{subfigure}[b]{0.5\linewidth}
\centering
\begin{tikzpicture}
\begin{axis}[tick style={draw=none}, xticklabels={}, yticklabels={}, width=\linewidth, height=0.63\linewidth]
    \addplot[only marks, mark options={draw=white, scale = .75}] table [header=false,x index=0,y index=1, col sep=comma] {Genes-10000/RDM-Spearman_005_005_3_YBR202W.csv}; 
    \addplot[mark options={}, dashed] table [header=false,x index=0,y index=1, col sep=comma] {Genes-10000/RDM-Spearman_005_005_3_YBR202W_line.csv};   
\end{axis}
\end{tikzpicture}
\caption{YBR202W}
\end{subfigure}\hfill
\begin{subfigure}[b]{0.5\linewidth}
\centering
\begin{tikzpicture}
\begin{axis}[tick style={draw=none}, xticklabels={}, yticklabels={}, width=\linewidth, height=0.63\linewidth]
    \addplot[only marks, mark options={draw=white, scale = .75}] table [header=false,x index=0,y index=1, col sep=comma] {Genes-10000/RDM-Spearman_005_005_4_YDR191W.csv}; 
    \addplot[mark options={}, dashed] table [header=false,x index=0,y index=1, col sep=comma] {Genes-10000/RDM-Spearman_005_005_4_YDR191W_line.csv};   
\end{axis}
\end{tikzpicture}
\caption{YDR191W}
\end{subfigure}
\begin{subfigure}[b]{0.5\linewidth}
\centering
\begin{tikzpicture}
\begin{axis}[tick style={draw=none}, xticklabels={}, yticklabels={}, width=\linewidth, height=0.63\linewidth]
    \addplot[only marks, mark options={draw=white, scale = .75}] table [header=false,x index=0,y index=1, col sep=comma] {Genes-10000/RDM-Spearman_005_005_5_YLR272C.csv}; 
    \addplot[mark options={}, dashed] table [header=false,x index=0,y index=1, col sep=comma] {Genes-10000/RDM-Spearman_005_005_5_YLR272C_line.csv};   
\end{axis}
\end{tikzpicture}
\caption{YLR272C}
\end{subfigure}\hfill
\begin{subfigure}[b]{0.5\linewidth}
\centering
\begin{tikzpicture}
\begin{axis}[tick style={draw=none}, xticklabels={}, yticklabels={}, width=\linewidth, height=0.63\linewidth]
    \addplot[only marks, mark options={draw=white, scale = .75}] table [header=false,x index=0,y index=1, col sep=comma] {Genes-10000/RDM-Spearman_005_005_6_YNL160W.csv}; 
    \addplot[mark options={}, dashed] table [header=false,x index=0,y index=1, col sep=comma] {Genes-10000/RDM-Spearman_005_005_6_YNL160W_line.csv};   
\end{axis}
\end{tikzpicture}
\caption{YNL160W}
\end{subfigure}
  \caption{\it 
 Transcript levels of the top genes, which were selected  by the  FDR procedure  based on the rearranged  Spearman's rank correlation coefficient, but not by the  FDR procedure  based on  Spearman's rank correlation coefficient.
 ($\alpha = 0.05$).  The dashed lines represent the $3$-nearest neighbour regression estimates.
\label{fig5}
}
\end{figure}
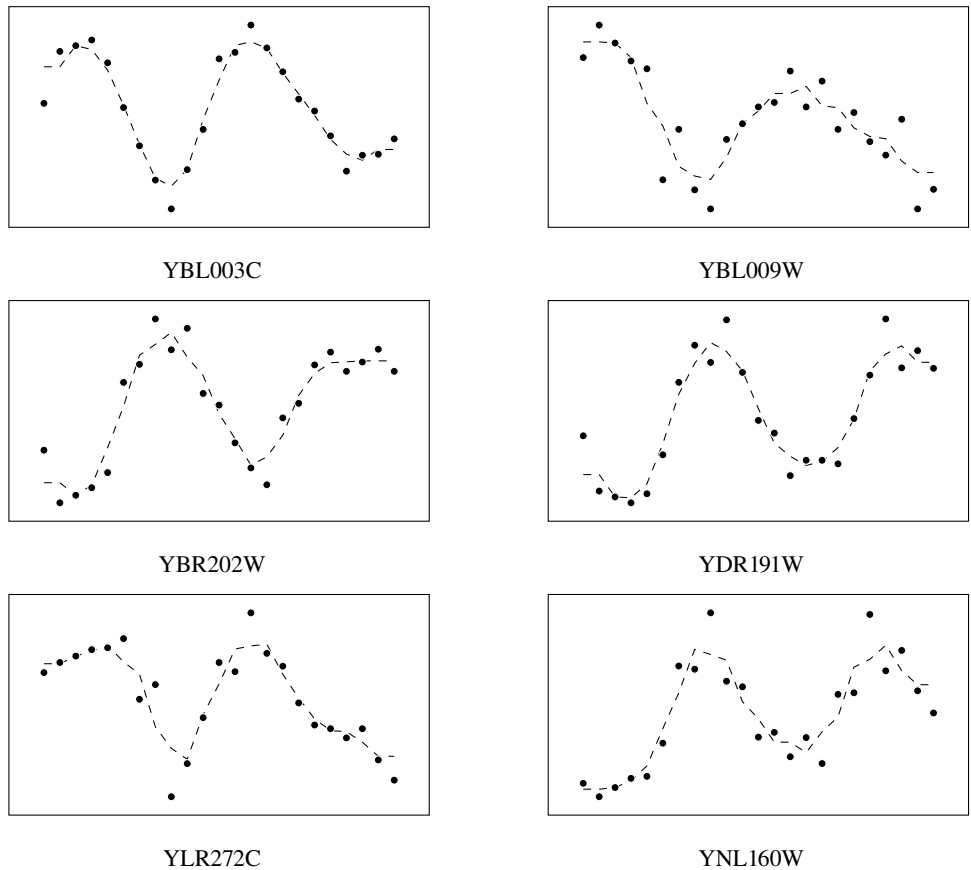

\begin{figure}
\captionsetup[subfigure]{justification=centering, labelformat=empty}
\begin{subfigure}[b]{0.5\linewidth}
\centering
\begin{tikzpicture}
\begin{axis}[tick style={draw=none}, xticklabels={}, yticklabels={}, width=\linewidth, height=0.63\linewidth]
    \addplot[only marks, mark options={draw=white, scale = .75}] table [header=false,x index=0,y index=1, col sep=comma] {Genes-10000/RDM_top_1_YBL003C.csv}; 
    \addplot[mark options={}, dashed] table [header=false,x index=0,y index=1, col sep=comma] {Genes-10000/RDM_top_1_YBL003C_line.csv};   
\end{axis}
\end{tikzpicture}
\caption{YBL003C} 
\end{subfigure}\hfill
\begin{subfigure}[b]{0.5\linewidth}
\centering
\begin{tikzpicture}
\begin{axis}[tick style={draw=none}, xticklabels={}, yticklabels={}, width=\linewidth, height=0.63\linewidth]
    \addplot[only marks, mark options={draw=white, scale = .75}] table [header=false,x index=0,y index=1, col sep=comma] {Genes-10000/RDM_top_2_YBL009W.csv}; 
    \addplot[mark options={}, dashed] table [header=false,x index=0,y index=1, col sep=comma] {Genes-10000/RDM_top_2_YBL009W_line.csv};   
\end{axis}
\end{tikzpicture}
\caption{YBL009W}
\end{subfigure}
\begin{subfigure}[b]{0.5\linewidth}
\centering
\begin{tikzpicture}
\begin{axis}[tick style={draw=none}, xticklabels={}, yticklabels={}, width=\linewidth, height=0.63\linewidth]
    \addplot[only marks, mark options={draw=white, scale = .75}] table [header=false,x index=0,y index=1, col sep=comma] {Genes-10000/RDM_top_3_YBR202W.csv}; 
    \addplot[mark options={}, dashed] table [header=false,x index=0,y index=1, col sep=comma] {Genes-10000/RDM_top_3_YBR202W_line.csv};   
\end{axis}
\end{tikzpicture}
\caption{YBR202W}
\end{subfigure}\hfill
\begin{subfigure}[b]{0.5\linewidth}
\centering
\begin{tikzpicture}
\begin{axis}[tick style={draw=none}, xticklabels={}, yticklabels={}, width=\linewidth, height=0.63\linewidth]
    \addplot[only marks, mark options={draw=white, scale = .75}] table [header=false,x index=0,y index=1, col sep=comma] {Genes-10000/RDM_top_4_YCL040W.csv}; 
    \addplot[mark options={}, dashed] table [header=false,x index=0,y index=1, col sep=comma] {Genes-10000/RDM_top_4_YCL040W_line.csv};   
\end{axis}
\end{tikzpicture}
\caption{YCL040W}
\end{subfigure}
\begin{subfigure}[b]{0.5\linewidth}
\centering
\begin{tikzpicture}
\begin{axis}[tick style={draw=none}, xticklabels={}, yticklabels={}, width=\linewidth, height=0.63\linewidth]
    \addplot[only marks, mark options={draw=white, scale = .75}] table [header=false,x index=0,y index=1, col sep=comma] {Genes-10000/RDM_top_5_YCR098C.csv}; 
    \addplot[mark options={}, dashed] table [header=false,x index=0,y index=1, col sep=comma] {Genes-10000/RDM_top_5_YCR098C_line.csv};   
\end{axis}
\end{tikzpicture}
\caption{YCR098C}
\end{subfigure}\hfill
\begin{subfigure}[b]{0.5\linewidth}
\centering
\begin{tikzpicture}
\begin{axis}[tick style={draw=none}, xticklabels={}, yticklabels={}, width=\linewidth, height=0.63\linewidth]
    \addplot[only marks, mark options={draw=white, scale = .75}] table [header=false,x index=0,y index=1, col sep=comma] {Genes-10000/RDM_top_6_YDL126C.csv}; 
    \addplot[mark options={}, dashed] table [header=false,x index=0,y index=1, col sep=comma] {Genes-10000/RDM_top_6_YDL126C_line.csv};   
\end{axis}
\end{tikzpicture}
\caption{YDL126C}
\end{subfigure}
  \caption{\it Transcript levels of genes, which were selected by the rearranged Spearman's rank correlation coefficient. 
  The figure shows the $6$ top genes with the smallest $p$-values.
   The dashed lines represent the $3$-nearest neighbour regression estimates.
   \label{fig3a} }
\end{figure}
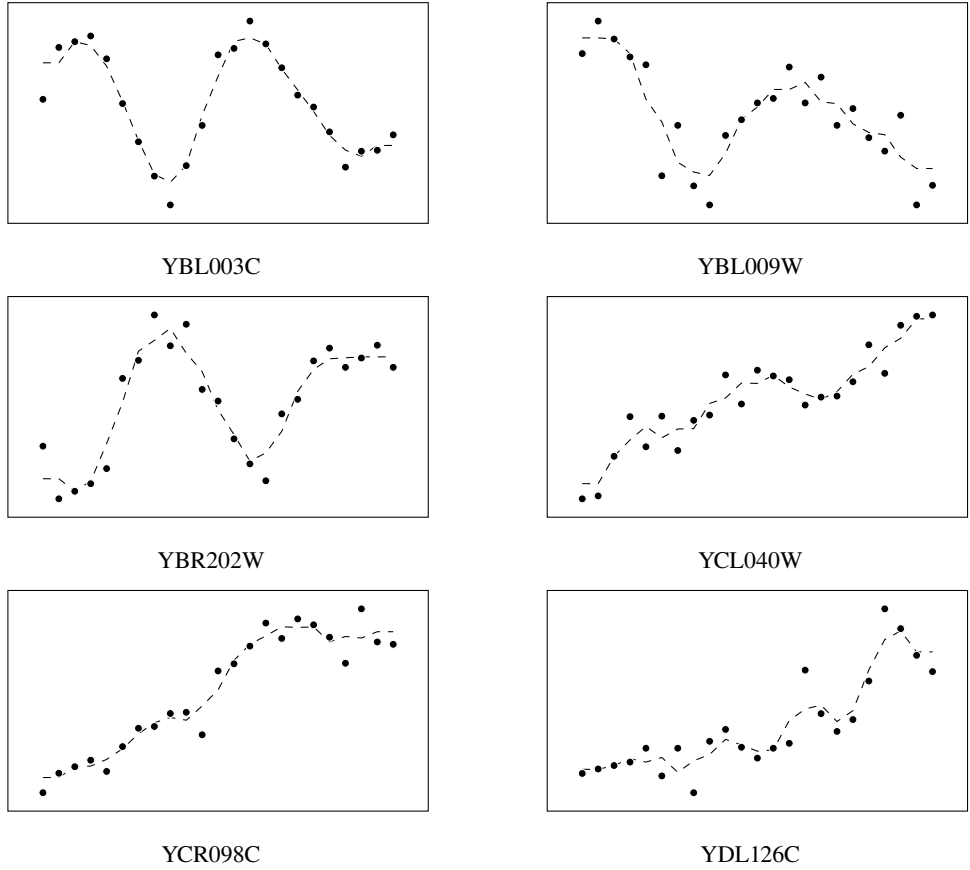
To be precise, we consider the curated data set (available through the R-package \enquote{minerva}) of $4381$ genes. 
For each gene, we  perform a permutation test based on Spearman's rank correlation for the hypotheses 
$$
H_0: 
\rho =0  ~~\text{versus}  ~~H_1: \rho >0 ~
$$
and a permutation test based on the statistic $\hat R_\rho$ for the hypotheses
\begin{equation}
H_0: R_\rho =0  ~~\text{versus}  ~~H_1: R_\rho >0 ~,
\label{deta}
\end{equation}
where we use $10000$ permutations.
The  corresponding $p$-values are use to identify the significant genes 
using the Benjamini–Hochberg FDR procedure with a false discovery rate of $0.05$
\citep[see,][]{benjamini1995}.
To concentrate  on  non-monotone dependencies, we exclude from those genes selected by the FDR procedure based  on the rearranged Spearman's rank correlation all genes which are also detected by Spearman's rank correlation. 
This results in $84$ remaining genes.
In Figure \ref{fig5} we display the  transcript levels of the top $6$ genes 
with the smallest $p$-values from the remaining data.
We observe that the FDR procedure  based on the rearranged Spearman’s rank correlation
identifies additional dependencies, which are oscillating and  are not found if the analysis is   based on  Spearman's rank correlation. A similar
observation was made by \cite{Chatterjee.2020} for his rank correlation coefficient, who used  
$4$ alternative tests to exclude genes with a monotone behaviour (a gene was excluded, whenever one of these tests identified it as significant).
Because both procedures are based on  different dependence measures the 
finally identified $6$ top genes do not necessarily coincide (only the gene YBL003C was selected by our and Chatterjee's procedure). However, all $6$ top genes found by \cite{Chatterjee.2020} are  also selected
by the FDR procedure based on rearranged Spearman’s rank correlation and vice versa.
Moreover,  the qualitative conclusion  from both methods is same. Both methods are able to identify  non-monotone  (in the concrete example oscillating) associations.

\begin{figure}
\captionsetup[subfigure]{justification=centering, labelformat=empty}
\begin{subfigure}[b]{0.5\linewidth}
\centering
\begin{tikzpicture}
\begin{axis}[tick style={draw=none}, xticklabels={}, yticklabels={}, width=\linewidth, height=0.63\linewidth]
    \addplot[only marks, mark options={draw=white, scale = .75}] table [header=false,x index=0,y index=1, col sep=comma] {Genes-10000/Chatterjee_top_1_YJL034W.csv}; 
    \addplot[mark options={}, dashed] table [header=false,x index=0,y index=1, col sep=comma] {Genes-10000/Chatterjee_top_1_YJL034W_line.csv};   
\end{axis}
\end{tikzpicture}
\caption{YJL034W} 
\end{subfigure}\hfill
\begin{subfigure}[b]{0.5\linewidth}
\centering
\begin{tikzpicture}
\begin{axis}[tick style={draw=none}, xticklabels={}, yticklabels={}, width=\linewidth, height=0.63\linewidth]
    \addplot[only marks, mark options={draw=white, scale = .75}] table [header=false,x index=0,y index=1, col sep=comma] {Genes-10000/Chatterjee_top_2_YJR004C.csv}; 
    \addplot[mark options={}, dashed] table [header=false,x index=0,y index=1, col sep=comma] {Genes-10000/Chatterjee_top_2_YJR004C_line.csv};   
\end{axis}
\end{tikzpicture}
\caption{YJR004C}
\end{subfigure}
\begin{subfigure}[b]{0.5\linewidth}
\centering
\begin{tikzpicture}
\begin{axis}[tick style={draw=none}, xticklabels={}, yticklabels={}, width=\linewidth, height=0.63\linewidth]
    \addplot[only marks, mark options={draw=white, scale = .75}] table [header=false,x index=0,y index=1, col sep=comma] {Genes-10000/Chatterjee_top_3_YNL007C.csv}; 
    \addplot[mark options={}, dashed] table [header=false,x index=0,y index=1, col sep=comma] {Genes-10000/Chatterjee_top_3_YNL007C_line.csv};   
\end{axis}
\end{tikzpicture}
\caption{YNL007C}
\end{subfigure}\hfill
\begin{subfigure}[b]{0.5\linewidth}
\centering
\begin{tikzpicture}
\begin{axis}[tick style={draw=none}, xticklabels={}, yticklabels={}, width=\linewidth, height=0.63\linewidth]
    \addplot[only marks, mark options={draw=white, scale = .75}] table [header=false,x index=0,y index=1, col sep=comma] {Genes-10000/Chatterjee_top_4_YKL177W.csv}; 
    \addplot[mark options={}, dashed] table [header=false,x index=0,y index=1, col sep=comma] {Genes-10000/Chatterjee_top_4_YKL177W_line.csv};   
\end{axis}
\end{tikzpicture}
\caption{YKL177W}
\end{subfigure}
\begin{subfigure}[b]{0.5\linewidth}
\centering
\begin{tikzpicture}
\begin{axis}[tick style={draw=none}, xticklabels={}, yticklabels={}, width=\linewidth, height=0.63\linewidth]
    \addplot[only marks, mark options={draw=white, scale = .75}] table [header=false,x index=0,y index=1, col sep=comma] {Genes-10000/Chatterjee_top_5_YDR112W.csv}; 
    \addplot[mark options={}, dashed] table [header=false,x index=0,y index=1, col sep=comma] {Genes-10000/Chatterjee_top_5_YDR112W_line.csv};   
\end{axis}
\end{tikzpicture}
\caption{YDR112W}
\end{subfigure}\hfill
\begin{subfigure}[b]{0.5\linewidth}
\centering
\begin{tikzpicture}
\begin{axis}[tick style={draw=none}, xticklabels={}, yticklabels={}, width=\linewidth, height=0.63\linewidth]
    \addplot[only marks, mark options={draw=white, scale = .75}] table [header=false,x index=0,y index=1, col sep=comma] {Genes-10000/Chatterjee_top_6_YGL089C.csv}; 
    \addplot[mark options={}, dashed] table [header=false,x index=0,y index=1, col sep=comma] {Genes-10000/Chatterjee_top_6_YGL089C_line.csv};   
\end{axis}
\end{tikzpicture}
\caption{YGL089C}
\end{subfigure}
  \caption{\it Transcript levels of genes, which were selected by the Chatterjee's correlation coefficient. 
  The figure shows the $6$ genes with the smallest $p$-values.
  The dashed lines represent the $3$-nearest neighbour regression estimates.
  \label{fig3b}
 }
\end{figure}
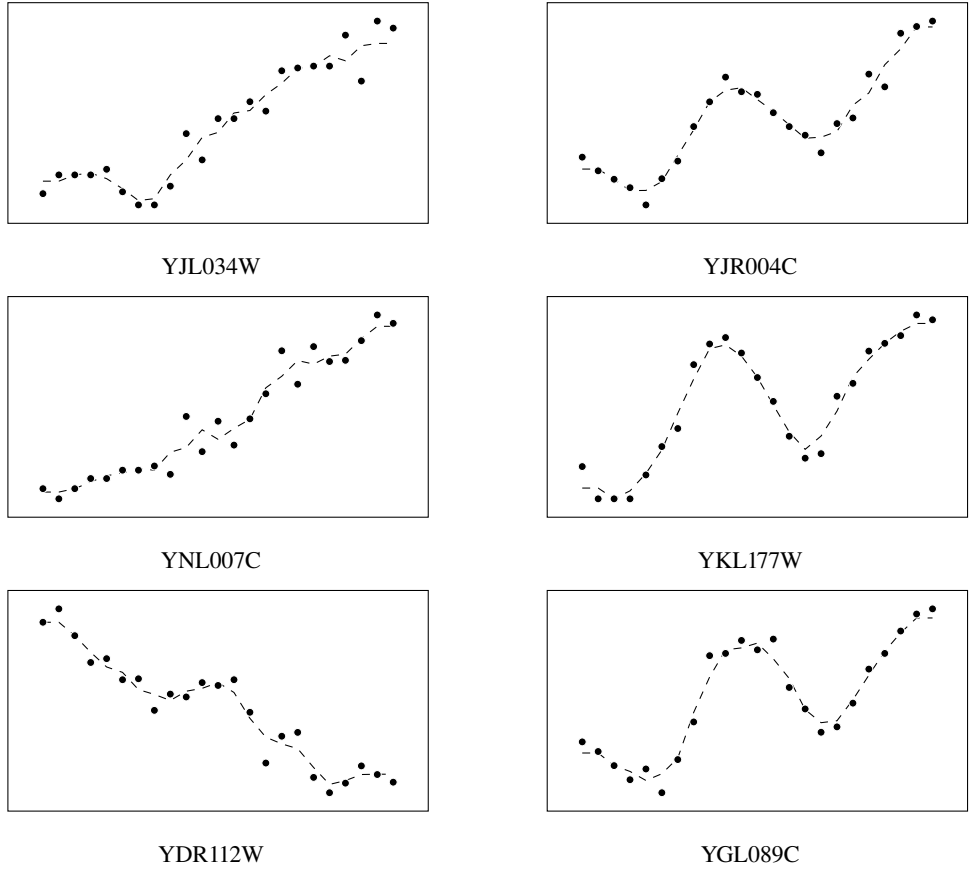

We conclude with a brief comparison of the FDR procedures based on the rearranged Spearman's 
and Chatterjee's rank correlation coefficient, if they are used without sorting out monotone dependencies
by preliminary analysis. In Figures \ref{fig3a}  and \ref{fig3b}, we display the 
 transcript levels of the $6$  genes with the smallest $p$-values after running the FDR procedure 
based on the two dependency measures. We observe again that both methods are able to 
identify non-monotone associations. Interestingly the top 
three genes identified by the rearranged Spearman’s rank correlation with the smallest three $p$-values exhibit an oscillating transcript level while
it looks more monotone for the next three genes. For the FDR procedure based on Chatterjee's rank correlation 
the picture is not so clear.

\section{Conclusions and outlook}
\HD{
In this paper we  developed  a general strategy to transform a given dependence measure into a new one which exactly characterizes independence (with the value $0$)  as well as functional  dependence (with the value $1$).  The approach is applicable 
to many of the commonly used dependence measures and we have
also developed   consistent estimates of the new dependence measures. 
An interesting question of future research is the asymptotic distribution of the new estimators.  However, such an investigation  will be very challenging 
because -  in contrast to  most of the literature  - we do not consider  “one specific” estimator for “one specific” dependence measure, for which the asymptotic distribution is established.
Thus one would have to identify classes of copulas and classes dependence measures for which such an asymptotic analysis is possible. 
\\
A further challenging question are  rates of convergence of the new estimators. These rates  are case-specific and will depend in an intricate way on the quality of the approximation of the copula $C$ by the induced checkerboard copula $C^{\#}_{N, N, n}(C) $  (which might be different for different copulas) and on the dependence measure under consideration.
To indicate how these  rates can be obtained in principle, we consider exemplary the case of Spearman's $\rho$ and note that 
\begin{align*}
    \left| R_\rho(C) - R_\rho(D) \right|    &= 12 \Big | \int _{[0, 1]^2} \ir{C}(u, v) - \ir{D}(u, v) \; \mathrm{d}\lambda(u, v) \Big| \\
                                            &= 12 \Big | \int_{[0, 1]^2} \int_{0}^u \partial_1 \ir{C}(s, v) - \partial_1 \ir{D}(s, v) \; \mathrm{d} s \; \mathrm{d}\lambda(u, v) \Big | \\
                                            &\leq  12 \int_{[0, 1]^2} \int_{0}^u \left|\partial_1 \ir{C}(s, v) - \partial_1 \ir{D}(s, v) \right| \; \mathrm{d} s \; \mathrm{d}\lambda(u, v)  \\   
                                            &\leq 12 D_1(\ir{C}, \ir{D})
                                            \leq 12  D_1(C, D) ~.
\end{align*}
Thus, in order to obtain the rate for the rearranged estimate of Spearman's $\rho$ we need  an estimate for  $  D_1(\hat{C}^{\#}_{N, N, n} ,C)$. 
 For this purpose we can  use results of  
\cite{Junker.2021}, who showed the inequality 
\begin{equation*}
    D_1(\hat{C}^{\#}_{N, N, n} ,C)    \leq K 
    \frac{\sqrt{\log \log n}}{n^{{1}/{2} - s}} + D_1(C^{\#}_{N, N, n}(C),C)
\end{equation*}
(almost surely), and it remains to estimate
the  deterministic quantity $D_1(C^{\#}_{N, N, n}(C),C)$.
Estimates of this type are again case specific. For example,
if $C = \Pi $ is the independence copula it follows that $C^{\#}_{N, N, n}(\Pi) = \Pi$ and $D_1(C^{\#}_{N, N, n}(\Pi), \Pi) = 0$.
Thus, we obtain in this case  
$$ |\hat R_\rho  - R_\rho (\Pi)  | = O \Big ( \frac{\sqrt{\log \log n}}{n^{{1}/{2} - s}} \Big )  
$$
(almost surely) for $0 < s < \frac{1}{2}$.
Note that the distance $D_1$ appears here, because of the representation $\rho(C)  = 12 \cInt{C(u, v)}{[0, 1]^2}{}{\lambda(u, v)} - 3 $ (see, e.g., Chapter~5 in \cite{Nelsen.2006}). A similar  argument can be used to derive a rate for 
the estimator of the rearranged  Blum-Kiefer-Rosenblatt's $R$.
Other dependence measures such as Kendall's $\tau$ or Gini's $\gamma$ have more complicated representations in terms of the copula $C$,  which might require other metrics to bound  $ \left| R_\mu (C) - R_\mu (D) \right| $. 
}
\appendix

\section{Proofs of the results in Section \ref{sec2}} \label{appendix:section2}

\customAppendixHeading{Proof of Theorem~\ref{thm:rearrangement}}
In order to show that the stochastically increasing rearrangement, $\ir{C}$ is a copula, we verify the properties (1) to (3) of Definition~\refdefcopula{}.
\begin{enumerate}
	\item It follows from $(\partial_1 C)^*(u, 0) = 0^* = 0$ that $\ir{C}(u, 0) = 0$. The identity $\ir{C}(0, v) = 0$ is trivial by Definition~\ref{def2.2}. 
	\item By definition, we have 
	\begin{align*}	
		\ir{C}(u, 1)	= \cInt{(\partial_1 C)^*(s, 1)}{0}{u}{s}
						= \cInt{1^*}{0}{u}{s}
						= u ~.
	\end{align*}
	In view of Proposition~\refpropertiesdecreasingrearrangement(3), we further obtain  that
	\begin{equation*}
		\ir{C}(1, v)	= \cInt{(\partial_1 C)^*(s, v)}{0}{1}{s}
						= \cInt{(\partial_1 C)^*(\sigma_v(s), v)}{0}{1}{s}
						= \cInt{\partial_1 C(t, v)}{0}{1}{t}
						= v ~.
	\end{equation*}
	\item From Definition~\refdefcopula{}(3) we see that $0 \leq \partial_1 C(\cdot, v_1) \leq \partial_1 C(\cdot, v_2)$ whenever $v_1\leq v_2$. 
	Combining this with Proposition~\refpropertiesdecreasingrearrangement{}(2) yields $(\partial_1 C)^*(\cdot, v_1) \leq (\partial_1 C)^*(\cdot, v_2)$. 
	Thus, the $\ir{C}$-volume of a rectangle $[u_1, u_2) \times [v_1, v_2)$ satisfies
	\begin{align*}
		V_{\ir{C}} \rbraces{[u_1, u_2) \times [v_1, v_2)}	&= \ir{C}(u_2, v_2) - \ir{C}(u_1, v_2) - \ir{C}(u_2, v_1) + \ir{C}(u_1, v_1) \\
															&= \cInt{(\partial_1 C)^*(s, v_2) - (\partial_1 C)^*(s, v_1)}{u_1}{u_2}{s} \geq 0 .
	\end{align*}
\end{enumerate}

Finally, we show that $C$ is stochastically increasing if and only if $C = \ir{C}$. If $C = \ir{C}$, of course, $C$ is stochastically increasing because $\ir{C}$ is. Conversely, suppose $C$ is stochastically increasing, i.e., each $u\mapsto C(u, v)$ is concave.
Then the right-hand derivative $u\mapsto \partial_1^+ C(u, v)$ is a decreasing and right-continuous function, and \cite[Thm.~4.2]{Chong.1971} guarantees that $\partial_1^+ C(u, v) = (\partial_1 C)^*(u, v)$. This implies
\begin{equation*}
	C(u, v)	= \cInt{\partial_1^+ C(t, v)}{0}{u}{t}
			= \cInt{(\partial_1 C)^*(t, v)}{0}{u}{t}
			= \ir{C}(u, v) ~.
\end{equation*}

\customAppendixHeading{Proof of Theorem \ref{thm:good_measure_R}} 
We will require a preliminary result. For this, we first note that the so-called (SD)-rearrangement of $C$ defined by
\begin{equation*}
	\dr{C}	(u,v)   := \cInt{(\partial_1 C)^*(1-s, v)}{0}{u}{s}
			        = v - \ir{C}(1-u, v) 
			        = (C^-*\ir{C})(u,v)
\end{equation*}
is a stochastically decreasing copula.

\begin{lemma} \label{lemma:hardy_littlewood_polya_copula}
For any copula $C$, we have $\dr{C} (u, v)   \leq C(u, v)    \leq \ir{C}(u, v)$.
\end{lemma}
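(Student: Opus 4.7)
The plan is to work coordinate-wise: fix $v\in[0,1]$ and view the inequality as an integral comparison between the function $s\mapsto \partial_1 C(s,v)$ and its decreasing rearrangement $s\mapsto (\partial_1 C)^*(s,v)$. Since $C(u,v)=\int_0^u \partial_1 C(s,v)\,\mathrm{d}s$ by absolute continuity of $u\mapsto C(u,v)$ (a standard consequence of the Lipschitz property of copulas), both sides of the claim are simply antiderivatives in the first coordinate of $\partial_1 C$, its decreasing rearrangement, and its increasing rearrangement, respectively. This reduces the bivariate statement to a classical univariate fact about monotone rearrangements.

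For the upper bound $C(u,v)\leq \ir{C}(u,v)$, the key ingredient is the Hardy--Littlewood inequality, which, for any nonnegative measurable $f:[0,1]\to\mathbb{R}$, gives
\begin{equation*}
\int_0^u f(s)\,\mathrm{d}s \;\leq\; \sup_{\lambda(E)=u}\int_E f(s)\,\mathrm{d}s \;=\; \int_0^u f^*(s)\,\mathrm{d}s
\end{equation*}
for every $u\in[0,1]$. Applying this with $f(s)=\partial_1 C(s,v)$ and noting that $\ir{C}(u,v)=\int_0^u (\partial_1 C)^*(s,v)\,\mathrm{d}s$ by Definition~\ref{def2.2} yields the claim. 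I would cite Proposition~\refpropertiesdecreasingrearrangement{} in the supplement for the layer-cake/truncation characterization.

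For the lower bound $\dr{C}(u,v)\leq C(u,v)$, I would use the symmetric fact that the increasing rearrangement minimizes partial integrals of length $u$ starting at $0$:
\begin{equation*}
\int_0^u (\partial_1 C)^*(1-s,v)\,\mathrm{d}s \;=\; \int_{1-u}^1 (\partial_1 C)^*(t,v)\,\mathrm{d}t \;=\; \inf_{\lambda(E)=u}\int_E \partial_1 C(s,v)\,\mathrm{d}s \;\leq\; \int_0^u \partial_1 C(s,v)\,\mathrm{d}s.
\end{equation*}
The left-hand side is $\dr{C}(u,v)$ by definition and the right-hand side is $C(u,v)$, which closes the argument.

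I do not expect a serious obstacle here: the only subtleties are measurability of $(\partial_1 C)^*$ jointly in $(s,v)$, for which one works pointwise in $v$, and the a.e.\ definedness of $\partial_1 C$, which is harmless since the identity $C(u,v)=\int_0^u \partial_1 C(s,v)\,\mathrm{d}s$ holds for each fixed $v$. The proof is thus essentially a pointwise-in-$v$ invocation of the two classical extremal properties of the monotone rearrangement already collected in the supplementary material.
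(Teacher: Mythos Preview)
Your proposal is correct and follows essentially the same route as the paper: fix $v$, write $C(u,v)=\int_0^u \partial_1 C(s,v)\,\mathrm{d}s$, and invoke the classical Hardy--Littlewood extremal property of the decreasing rearrangement (the paper phrases this via Theorem~\refproporderproperties(a) as $\int_0^1 \mathbbm{1}_{[0,u]}\,\partial_1 C \leq \int_0^1 \mathbbm{1}_{[0,u]}\,(\partial_1 C)^*$, which is exactly your $\sup_{\lambda(E)=u}$ characterization). The lower bound is handled symmetrically in both arguments.
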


\begin{proof}
By Theorem~\refproporderproperties{}(a) we obtain the upper estimate
\begin{equation*}
	C (u, v)	= \cInt{\indFunc{[0, u]}(t) \partial_1 C(t, v)}{0}{1}{t} 
				\leq \cInt{\indFunc{[0, u]}(t)  (\partial_1 C)^*(t, v)}{0}{1}{t} 
				= \ir{C}(u, v) ~.
\end{equation*}
The lower estimate follows analogously.
\end{proof}

We will now prove properties (1.1)--(1.3) for $R_{\mu}(C) = \mu(\ir{C})$.
For this, we say that the copula $C$ is \emph{completely dependent} if there exists a measurable function $f$ such that $V = f(U)$.
It is proven in \cite{Darsow.1992} that $C$ is completely dependent if, and only if,
\begin{equation} \label{eq:compldep}
\partial_1 C(u, v) \in \curly{0, 1}
\end{equation}
for almost all $u \in [0, 1]$ and all $v \in [0, 1]$.

\begin{enumerate}
\item[(1.1)] Since $\mu$ only takes values between $0$ and $1$, we obtain the first assertion.
\item[(1.2)] If $C = \Pi$, we have $\mu(\ir{C}) = \mu(\ir{\Pi}) = \mu(\Pi) = 0$. 
If, on the other hand,  $\mu(\ir{C}) = 0$, we conclude $\ir{C} = \Pi$ by the properties of $\mu$. But then $\dr{C} = C^-*\Pi = \Pi$, and Lemma~\ref{lemma:hardy_littlewood_polya_copula} yields $\Pi = \dr{C} \leq C \leq \ir{C} = \Pi$, hence $C=\Pi$.
\item[(1.3)] If $C$ is completely dependent, then $\ir{C} = C^+$ and $\mu(\ir{C}) = \mu(C^+) = 1$ by definition.
On the other hand, $\mu(\ir{C}) = 1$ implies $\ir{C} = C^+$ by the properties of $\mu$.
Thus, $\partial_1 C(u, v) = (\partial_1 C)^*(\sigma_v(u), v) \in \curly{0, 1}$, so $C$ is completely dependent by \eqref{eq:compldep}.
\end{enumerate}

\customAppendixHeading{Proof of Equation \eqref{det2}}
The statement is an immediate consequence of the fact that the decreasing rearrangement of $g_v(u) := \partial_1 C(u, v) - v$ is $g^*_v(u) = \partial_1 \ir{C}(u, v) - v$.
As the decreasing rearrangement leaves all $L^p$-norms invariant, we conclude
\begin{equation*}
	\cInt{\abs{\partial_1 C(u, v) - v}^p}{0}{1}{u}	= \cInt{\abs{\partial_1 \ir{C}(u, v) - v}^p}{0}{1}{u} ~.
\end{equation*}
Integrating with respect to $v$ yields the desired result with $p=1,2$.

\customAppendixHeading{Proof of the statements in  Example \ref{prop:measure_lp}} 
In this section we show that the Schweizer-Wolff measure $\sigma_p$ in \eqref{det11} for $1 \leq p < \infty$ satisfies the properties (1.1) to (1.3) on the set $\ir{\C}$.

\begin{enumerate}
    \item[(1.1)] $\sigma_p$ takes values only between $0$ and $1$, since $\ir{C}$ is stochastically increasing and fulfils $0  \leq \ir{C}  -  \Pi \leq  C^+ -  \Pi$.
	\item[(1.2)] $\sigma_p(C) = 0$ holds if and only if $C= \Pi$. 
	\item[(1.3)] Suppose $C = \ir{C}$ is completely dependent. Then $\partial_1 \ir{C}(u, v) \in \curly{0, 1}$ by \eqref{eq:compldep} and $\partial_1 \ir{C}(u, v) = \indFunc{[0, v]}(u)$ by Definition~\refdefcopula(2).
	Thus, $\ir{C} = C^+$ which yields $\sigma_p(C) = 1$. 
	On the other hand, if $C$ is not completely dependent, then an analogous argument shows that $\ir{C} < C^+$ on a set of positive measure such that
	\begin{equation*}
		\sigma_p(C)	= \frac{\norm{\ir{C} - \Pi}_p}{\norm{C^+ - \Pi}_p}	< \frac{\norm{C^+ - \Pi}_p}{\norm{C^+ - \Pi}_p} = 1 ~.
	\end{equation*}
\end{enumerate}

\customAppendixHeading{Proof of the statements in  Example \ref{ex:kappa_measures}} 
We introduce the concordance functional
\begin{equation*}
    Q(C_1, C_2) := 4 \cInt{C_1(u, v)}{[0, 1]^2}{}{C_2(u, v)} - 1~
\end{equation*}
and  point out for later reference that $Q$ is symmetric and fulfils
\begin{equation} \label{eq:Qmonotone}
Q(C_1, C_2) \leq Q(C_1', C_2)
\end{equation}
whenever $C_1 \leq C_1'$.
Then  the four measures of concordance (see Definition~\refdefconcordance{}) Spearman's $\rho$, Kendall's $\tau$, Gini's $\gamma$ and Blomqvist's $\beta$ are given by (see, e.g., Chapter~5 in \cite{Nelsen.2006})
\begin{align*}
\rho(C) &= 3 Q(C,\Pi) = 12 \cInt{C(u, v)}{[0, 1]^2}{}{\lambda(u, v)} - 3 \\
\tau(C) &= Q(C,C) = 4 \cInt{C(u, v)}{[0, 1]^2}{}{C(u, v)} - 1 \\
\gamma(C) &= Q(C,C^-) + Q(C,C^+) = 2 \cInt{|u+v-1|-|u-v|}{[0, 1]^2}{}{C(u, v)} \\
\beta(C) &= 4 C\rbraces{\frac{1}{2},\frac{1}{2}} - 1 ~.
\end{align*}

First of all, $\beta$ does not satisfy (1.3) on $\ir{\C}$ because
the copula\footnote{$C$ is a so-called ordinal sum; see \cite[Sect.~3.2.2]{Nelsen.2006}.}
\begin{equation*}
	C(u, v)	= \begin{cases}
					2\Pi(u, v)		&\text{ if } (u, v) \in [0, 1/2]^2 \\
					C^+(u, v)		&\text{ else }
				\end{cases}
\end{equation*}
is stochastically increasing with $C\neq C^+$, yet $\beta(C) = 4 C(1/2, 1/2) - 1 = 1 = \beta(C^+)$. 

We now show that $\rho, \tau$ and $\gamma$ all satisfy the properties (1.1)--(1.3) on $\ir{\C}$. 
Since any concave function $f: [0,1]\to [0,v]$ with $f(0)=0$ and $f(1)=v$ satisfies $f(u)\geq uv= \Pi(u,v)$, any stochastically increasing copula $C$ satisfies
\begin{equation} \label{eq:posquad}
\Pi \leq C = \ir{C} \leq C^+ ~.
\end{equation}
Hence we conclude from Definition~\refdefconcordance (4) that $0 = \kappa(\Pi) \leq \kappa(\ir{C}) \leq \kappa(C^+) = 1$. 
It remains to verify properties (1.2) and (1.3) for $\rho, \tau$ and $\gamma$. 

First, we look at Spearman's $\rho$. By Proposition~\ref{prop:spearman}, $R_\rho$ coincides with $R_{\sigma_1}$ so that, in view of Example \ref{prop:measure_lp} with $p=1$, the properties (1.2) and (1.3) hold.

Next, consider Kendalls's $\tau$. In order to prove (1.2), we assume $\tau(C) = \tau(\Pi)$, i.e.\ $Q(C,C) = Q(\Pi,\Pi)$, for some $C \in \ir{\C}$. In view of \eqref{eq:Qmonotone} and \eqref{eq:posquad} we obtain $Q(\Pi, \Pi) \leq Q(C, \Pi)  \leq  Q(C, C) = Q(\Pi,\Pi)$ so that
\begin{align*}
	0 	&\leq 4\cInt{\abs{C(u, v) - \Pi(u, v)}}{[0, 1]^2}{}{\lambda(u, v)} \\
		&= 4\cInt{C(u, v) - \Pi(u, v)}{[0, 1]^2}{}{\lambda(u, v)}
		= Q(C, \Pi) - Q(\Pi, \Pi)
		= 0
\end{align*}
which indeed implies $C = \Pi$. 
For the proof of (1.3), we suppose $\tau(C) = \tau(C^+)$, i.e.\ $Q(C,C) = Q(C^+,C^+)$.
In view of \eqref{eq:Qmonotone} and \eqref{eq:posquad} we obtain $Q(C,C) \leq Q(C,C^+)  \leq  Q(C^+,C^+) = Q(C,C)$ so that
\begin{align*}
	0 	&\leq 4\cInt{\abs{u - C(u, u)}}{0}{1}{u}
	    = 4\cInt{u - C(u, u)}{0}{1}{u} \\
		&= 4 \cInt{C^+(u, v) - C(u, v)}{[0, 1]^2}{}{C^+(u, v)}
		= Q(C^+, C^+) - Q(C, C^+) 
		= 0 ~.
\end{align*}
Therefore $C(u, u) = u$ for all $u \in [0, 1]$ so that $C = C^+$ (see \cite[Ex~2.6.4]{Durante.2015}).\footnote{The observation that $\tau(C) = \tau(C^+)$ implies $C=C^+$ also in the multivariate case is contained in \cite[Thm.~3.2]{Fuchs.2018}.}

Finally, we turn to Gini's $\gamma$.
In order to prove (1.2), we assume $\gamma(C) = \gamma(\Pi)$, i.e.\ $$Q(C, C^+) + Q(C, C^-) = Q(\Pi, C^+) + Q(\Pi, C^-) ~, $$ for some $C \in \ir{\C}$. 
In view of \eqref{eq:Qmonotone} and \eqref{eq:posquad} we obtain 
\begin{align*}
    Q(\Pi, C^+) + Q(\Pi, C^-)   \leq Q(C, C^+) + Q(\Pi, C^-)  
                                \leq  Q(C, C^+) + Q(C, C^-)  
                                = Q(\Pi, C^+) + Q(\Pi, C^-)
\end{align*}
so that
\begin{equation*}
	0 	\leq 4\cInt{\abs{C(u, u) - \Pi(u, u)}}{0}{1}{u}
		= 4\cInt{C(u, u) - \Pi(u, u)}{0}{1}{u}
		= Q(C, C^+) - Q(\Pi, C^+)
		= 0 ~.
\end{equation*}
It follows that $C(u, u) = \Pi(u,u)$, and Proposition~2.1 in \cite{Durante.2009} yields $C=\Pi$.
For the proof of (1.3), we suppose $\gamma(C) = \gamma(C^+)$, i.e.\ 
$$Q(C, C^+) + Q(C, C^-) = Q(C^+, C^+) + Q(C^+, C^-). 
$$
In view of \eqref{eq:Qmonotone} and \eqref{eq:posquad} we obtain 
\begin{align*}
Q(C, C^+) + Q(C, C^-)   \leq Q(C^+, C^+) + Q(C, C^-) 
                        \leq Q(C^+, C^+) + Q(C^+, C^-)  
                        = Q(C, C^+) + Q(C, C^-) ~,
\end{align*}
which implies
\begin{align*}
	0 	&\leq 4\cInt{\abs{u - C(u, u)}}{0}{1}{u}
	    = 4\cInt{u - C(u, u)}{0}{1}{u} \\
		&= 4 \cInt{C^+(u, v) - C(u, v)}{[0,1]^2}{}{C^+(u, v)} 
		= Q(C^+, C^+) - Q(C, C^+) 
		= 0 ~.
\end{align*}
Therefore $C(u, u) = u$ for all $u \in [0, 1]$ so that $C = C^+$ \cite[Ex.~2.6.4]{Durante.2015}.

\customAppendixHeading{Proof of Proposition \ref{prop:spearman}}
This follows readily from the fact that $\ir{C} \geq \Pi$ since 
\begin{align*}
R_{\sigma_1}(C) & = \frac{\norm{\ir{C} - \Pi}_1}{\norm{C^+ - \Pi}_1} = 12 \cInt{\ir{C}(u, v)-uv}{[0, 1]^2}{}{\lambda(u, v)} \\
& = 12 \cInt{\ir{C}(u, v)}{[0, 1]^2}{}{\lambda(u, v)} - 3 = R_{\rho}(C) ~. 
\end{align*}

\customAppendixHeading{Proof of Theorem \ref{cor:spearman_positive}}
In view of Definition~\refdefconcordance{}, we have $\kappa(\dr{C}) = \kappa(C^-*\ir{C}) = -\kappa(\ir{C})$. Consequently, we know from Lemma~\ref{lemma:hardy_littlewood_polya_copula} and the monotonicity of $\kappa$ with respect to the pointwise ordering that
\begin{equation*}
	- \kappa(\ir{C}) = \kappa(\dr{C}) \leq \kappa(C) \leq \kappa(\ir{C})~,
\end{equation*}
which implies $\abs{\kappa(C)} \leq \kappa (\ir{C}) = \RDk(C)$. Moreover, if $C$ is stochastically monotone we have $C=\dr{C}$ or $C=\ir{C}$ and, therefore, $\abs{\kappa(C)} = \kappa (\ir{C})$.

\customAppendixHeading{Proof of Proposition \ref{prop:dpi}}
First, we point out that that the Markov product of two copulas $C$ and $D$ satisfies
\begin{equation} \label{eq:majorization_order}
    \partial_1 (C * D)(\cdot ,v) = \partial_u \cInt{\partial_2 C(\cdot, t) \cdot \partial_1 D(t, v)}{0}{1}{t} \preceq \partial_1 D(\cdot, v)
\end{equation}
for all $v \in [0, 1]$, where ``$\preceq$'' denotes   the majorization order 
introduced in Definition~\refdefmajorization{}. 
This follows from Theorem~\refproporderproperties{}(3) and the fact that $\partial_1 (C*D)(u, v) = T_C \partial_1 D (\cdot, v) (u)$. 
In particular,
\begin{equation*}
    \ir{(C * D)}(u, v) \leq \ir{D} (u, v) ~. 
\end{equation*}
Now suppose $X, Y$ and $Z$ are continuous random variables such that $Y$ and $Z$ are conditionally independent given $X$.
Then $C_{ZY} = C_{ZX} * C_{XY}$ in view of  Theorem~3.1 in \cite{Darsow.1992}, and \eqref{eq:majorization_order} yields
\begin{equation*}
	\ir{C}_{ZY} = \ir{(C_{ZX} * C_{XY})}	\leq \ir{C}_{XY} ~.
\end{equation*}
Thus, the data processing inequality $\RDm(C_{ZY}) = \mu(\ir{C}_{ZY}) \leq \mu(\ir{C}_{XY}) = \RDm(C_{XY})$ follows from the monotonicity of $\mu$.

\customAppendixHeading{Proof of Corollary~\ref{cor:self_equitability}}
The data processing inequality in Proposition~\ref{prop:dpi} states that 
$$R_{\mu}(f(X),Y) \leq R_{\mu}(X,Y)$$
for all measurable functions $f$. If, in addition, $X$ and $Y$ are independent given $f(X)$, a second application of Proposition~\ref{prop:dpi} yields $R_{\mu}(X,Y) \leq R_{\mu}(f(X),Y)$, and equality holds.

\section{Proofs of the results in Section \ref{sec3} }
\label{appendix:section3}

\customAppendixHeading{Proof of  Theorem \ref{thm:CB_algorithm}}
The equality $\ir{\CBnn{A}} = \CBnn{\ir{A}}$ follows directly from the definition of  Algorithm
\ref{alg1} and the characterization \eqref{det5}. 
It remains to show that  the matrix $\ir{A}$ satisfies  indeed the properties in \eqref{det20}.
To do so, we calculate 
\begin{equation*}
	\sum\limits_{\ell=1}^{N_2} \ir{a}_{k\ell}	= \sum\limits_{\ell =1}^{N_2} \widetilde{B}^\ell_k - \widetilde{B}^{\ell-1}_k 
												= \widetilde{B}^{N_2}_k - \widetilde{B}^0_k 
												= \widetilde{B}^{N_2}_k 
												= \sum\limits_{\ell = 1}^{N_2} a_{k\ell}
												= N_2 
\end{equation*}
as well as
\begin{align*}
	\sum\limits_{k = 1}^{N_1} a_{k \ell}&	= \sum\limits_{k = 1}^{N_1} \widetilde{B}^\ell_k - \widetilde{B}^{\ell-1}_k 
											= \sum\limits_{k = 1}^{N_1} {B}^\ell_k - {B}^{\ell-1}_k  
											= \sum\limits_{j = 1}^\ell \sum\limits_{k =1}^{N_1} a_{k j} - \sum\limits_{j = 1}^{\ell-1} \sum\limits_{k =1}^{N_1} a_{k j}
											= \ell N_1 - (\ell - 1) N_1 = N_1 ~. \qedhere
\end{align*}
The nonnegativity of $\ir{a}_{k \ell}$ follows by construction.

\customAppendixHeading{Proof of Theorem \ref{thm:consistency}}
We will start by showing a contraction property of the (SI)-rearrangement with respect to $D_p$.
For all copulas $C$ and $D$, it holds by Theorem~\refproporderproperties{}(b)
\begin{equation*}
	\partial_1 \ir{C}(\cdot, v) - \partial_1  \ir{D}(\cdot, v) \preceq \partial_1 C(\cdot, v) - \partial_1 D(\cdot,v)
\end{equation*}
for all $v$ in $[0, 1]$, where ``$\preceq$'' denotes the majorization order introduced in Definition~\refdefmajorization{}.
Thus, due to Theorem~\refproporderproperties{}, we have for all $v \in [0, 1]$ and any $1 \leq p < \infty$
\begin{equation*}
	\cInt{ \abs{\partial_1 \ir{C}(u, v) - \partial_1  \ir{D}(u, v)}^p}{0}{1}{u} \leq \cInt{\abs{\partial_1 C(u, v) - \partial_1 D(u, v)}^p}{0}{1}{u} ~.
\end{equation*}
and integrating with respect to $v$ yields $D_p(\ir{C}, \ir{D}) \leq D_p(C, D)$.
Now it follows  by similar arguments as in the proof of Theorem~4.5.8 in \cite{Durante.2015}
(these authors considered the case $N_1 = N_2$)
that 
\begin{equation*}
	0\leq D_p(\ir{\CBnn{C}}, \ir{C})	\leq D_p (\CBnn{C}, C)  \rightarrow 0 ~. 
\end{equation*}

\customAppendixHeading{Proof of Theorem \ref{thm:estimation}}
The almost sure convergence of $D_1(\hat C^\#_{N_1, N_2, n}, C) \rightarrow 0$ follows from Theorem~3.12 in \cite{Junker.2021}, where $\hat C^\#_{N_1, N_2, n}$ is a genuine copula.
Thus, an application of the continuity property given in Theorem~\ref{thm:consistency} implies
\begin{equation*}
	0	\leq D_1((\hat C^{\#}_{N_1, N_2, n})^{\uparrow}, \ir{C})	\leq   D_1(\hat C^\#_{N_1, N_2, n}, C) \rightarrow 0 ~.
\end{equation*}
and therefore $\hat R_\mu \rightarrow \RDm(C)$ almost surely. 

\begin{supplement}
\stitle{Supplement to \enquote{Rearranged dependence measures}}
\sdescription{This supplement contains  basic facts about copulas and monotone rearrangements and provides proofs of the multivariate results of Section~\ref{section:multivariate}.}
\end{supplement}

\begin{funding}
C. Strothmann gratefully acknowledges financial support from the German Academic Scholarship Foundation.
The work of H. Dette was supported by the DFG Research Unit 5381 {\it Mathematical Statistics in the Information Age}. The authors are grateful to two referees for their constructive comments on an earlier version of this paper.
\end{funding}

\bibliographystyle{imsart-nameyear} 
\bibliography{reference}

\includepdf[pages=-]{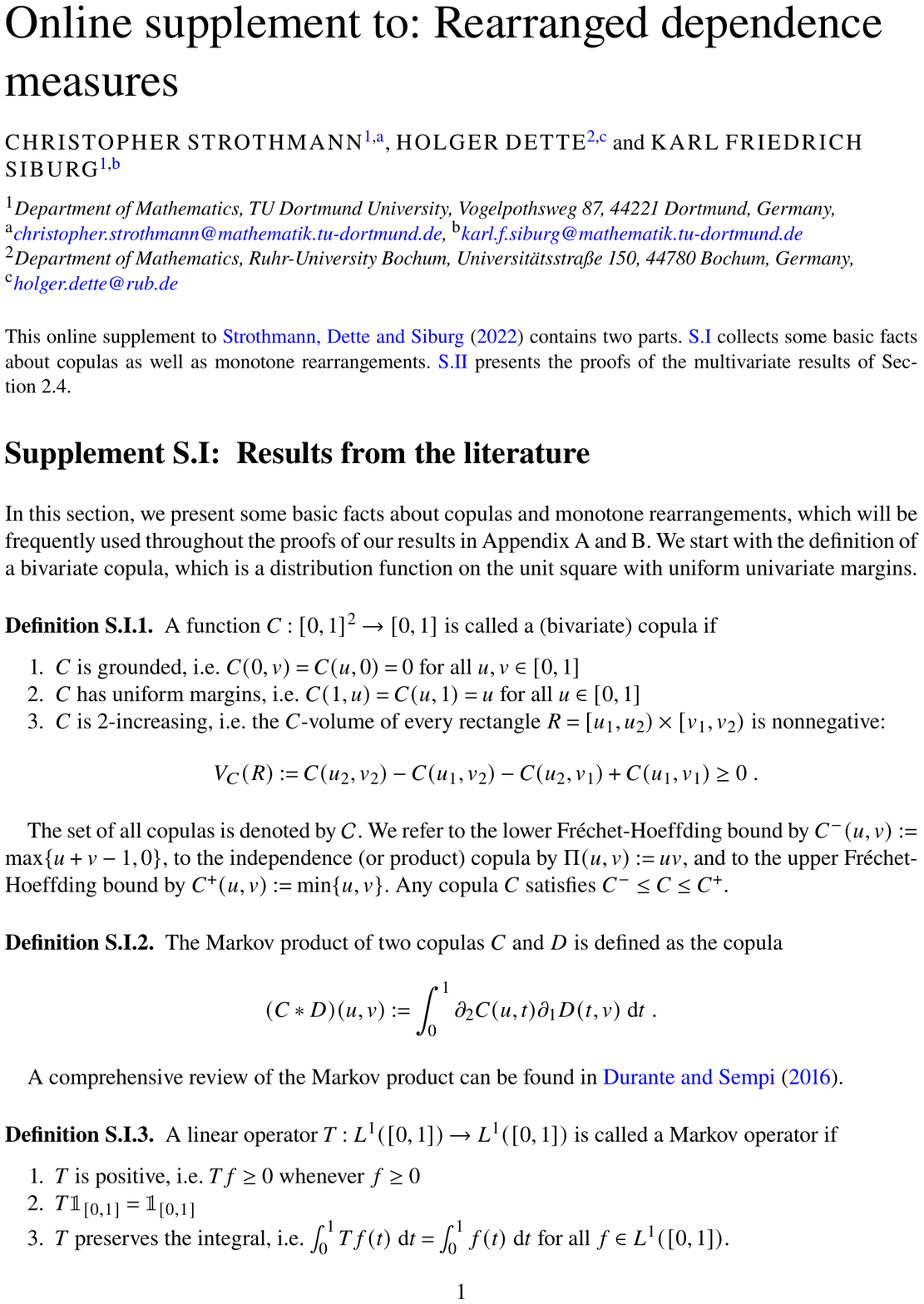}

\end{document}